\documentclass[11pt]{article}
\usepackage{graphicx} 
\usepackage{authblk}
\usepackage{amsthm}
\usepackage{amsmath}
\usepackage{amssymb}
\usepackage{enumitem}
\usepackage{framed}
\usepackage{mathrsfs}
\usepackage{xcolor}

\usepackage[colorlinks,linkcolor=blue,citecolor=blue,urlcolor=blue,hypertexnames=true,backref=page]{hyperref}
\setlist[enumerate]{label=(\roman*),labelindent=1em}
\usepackage[margin=1in]{geometry}

\theoremstyle{definition}
\newtheorem{thm}{Theorem}[section]
\newtheorem{lem}[thm]{Lemma}
 \newtheorem{cor}[thm]{Corollary}
 \newtheorem{dfn}[thm]{Definition}
 \newtheorem{rem}[thm]{Remark}
 \newtheorem{exa}[thm]{Example}
 
 \newcommand{\limp}{\longrightarrow}
 
 \newcommand{\mb}{\mathbf{mbCiw}}
 \newcommand{\plog}{\mathbf{PosLog}}
 \newcommand{\ov}{\overline}
 \newcommand{\f}{\mathcal{F}}
 \newcommand{\fm}{\mathsf{Form}}
 \newcommand{\tr}{\mathrm{Trv}}
 \newcommand{\pow}{\mathcal{P}}
 \newcommand{\nc}{\,\mid\!\sim\,}
 \newcommand{\lang}{\mathcal{L}}
\newcommand{\logl}{\mathscr{L}}
\newcommand{\val}{\mathsf{Val}}
\newcommand{\wt}{\mathrm{Wt}}
\newcommand{\cm}{\mathrm{Cons_M}}
\newcommand{\co}{\mathrm{Cons_O}}
\newcommand{\ctr}{\mathrm{Cntr}}

\title{Possibilistic Logic over a Logic of Formal Inconsistency}
\author[1]{Esha Jain}
\author[1]{Sankha S. Basu\footnote{Email: \href{mailto:sankha@iiitd.ac.in}{sankha@iiitd.ac.in}}}
 \affil[1]{Department of Mathematics\\
  Indraprastha Institute of Information Technology-Delhi\\
  New Delhi, India.}
  \date{September 10, 2026}

\begin{document}

\maketitle

\begin{abstract}
In this article, we have introduced a new possibilistic logic on a logic of formal inconsistency with the aim of developing a possibility theoretic framework to deal with uncertainty and inconsistency meaningfully without leading to a system collapse. We have discussed the syntax and semantics for this logic and have proved the soundness and completeness theorems. A set of new measures of consistency, contradictoriness, and triviality of a set of formulas have been defined. These have then been put to use in an example to show that this framework can provide better means of machine reasoning.
\end{abstract}

\textbf{Keywords: Possibility theory; Possibilistic logic; Logics of formal inconsistency} 

\tableofcontents

\section{Introduction}
 Logic serves as a foundation for artificial intelligence (AI). A fundamental challenge in AI is that information and knowledge often come with inconsistency and uncertainty. Thus, it is important for a logic for AI to be able to cope with, i.e., effectively derive conclusions in the presence of, inconsistency and uncertainty. The use of \emph{possibility theory} to deal with uncertain information is one such approach. \emph{Possibilistic logic} is the logical counterpart of possibility theory, and was developed in \cite{DuboisLangPrade1994,DuboisPrade2014}. It is a `weighted logic' that handles uncertainty by associating degrees or weights to logic formulas. A basic possibilistic logic formula is a pair $(\varphi;\,x)$ consisting of a classical logic formula $\varphi$ associated with a certainty level $x\in(0,1]$. So, basic possibilistic logic serves the purpose of dealing with uncertainty but the case of inconsistency remains unresolved. One of the limitations of classical logic that renders it unusable in the face of a contradiction is that any contradiction leads to triviality. In other words, everything follows, i.e., \emph{absolute inconsistency} results, from a \emph{negation inconsistency}. Thus, all contradictions are equivalent in classical logic. A \emph{paraconsistent} logic is designed to overcome this problem. A logic $\logl=\langle\lang,\models_{\logl}\rangle$, where $\lang$ is the set of formulas and $\models_\logl\,\subseteq\pow(\lang)\times\lang$ ($\pow(\lang)$ denotes the power set of $\lang$) is the consequence relation of the logic, is said to be \emph{paraconsistent} if it invalidates a \emph{principle of logical explosion}. A principle of explosion is one that is responsible for the resulting of triviality from a contradiction. This is commonly expressed as follows: for all $\alpha,\beta\in\lang$, $\{\alpha,\neg\alpha\}\models_{\logl}\beta$, and is called \emph{ex contradictione sequitor quodlibet (ECQ)}. A logic $\logl$ is then said to be paraconsistent (with respect to $\neg$), if ECQ fails in it, i.e., there exist $\alpha,\beta\in\lang$ such that $\{\alpha,\neg\alpha\}\not\models_{\logl}\beta$. The \emph{logics of formal inconsistency (LFIs)} form a class of paraconsistent logics that were introduced to handle inconsistency while retaining as much of classical reasoning as possible. LFIs (see \cite{Carnielli2007, CarnielliConiglio2016}) achieve this through a unary \emph{consistency} operator $\circ$ in the language. For any formula $\alpha$, the formula $\circ\alpha$ is intended to mean that ``$\alpha$ is consistent.'' Then, in the presence of $\circ\alpha$ as a hypothesis, negation applied on $\alpha$ behaves classically. In other words, $\{\alpha,\neg\alpha\}$ doesn't explode but $\{\alpha,\neg\alpha,\circ\alpha\}$ does. This is called the \emph{gentle principle of explosion}. We have presented this formally in Definition \ref{def:lfi}. However, an LFI, by itself, is not equipped to handle uncertainty. 
  
 This article aims to combine an LFI with possibility theory. We will develop a possibilistic logic over the LFI $\mb$ (discussed in detail in \cite{CarnielliConiglio2016}). This approach derives its enhanced expressive power from the fact that the consistency of a formula is expressible at the object language level in an LFI.  Similar approaches for handling uncertainty using possibility and necessity measures have been taken in \cite{besnard1994possibility} and \cite{CarnielliBueono-Soler2021} using the LFIs $\mathbf{C_1}$ and \textbf{Cie}, respectively. However, their logical counterparts, i.e., the corresponding possibilistic logics were not developed in these articles.
 
 In this connection, we mention another approach for deriving conclusions under uncertainty. This was introduced in \cite{chakraborty1988use,Chakraborty01011995} and here instead of attaching weights to propositions, the consequence relation is graded. As mentioned in \cite{ChakrabortyDutta2019}, the graded consequence relation $\nc$ is a \emph{fuzzy binary relation} (and in that sense a generalization of the classical consequence relation) from $\pow(\lang)$ to $\lang$, where $\lang$ is the set of formulas constructed over some logical alphabet, as before. Every consequence $\Gamma\nc\alpha$, where $\Gamma\cup\{\alpha\}\subseteq\lang$, is assigned a \emph{degree} or \emph{grade} $a$, which is an element of a complete residuated lattice, e.g., the real interval $[0,1]$ with the usual operations. This is denoted by $gr(\Gamma\nc\alpha)=a$. The approach in this this article is different as we assign weights to formulas instead, which represent certainty or necessity.    

 The article is structured as follows. In Section \ref{sec:lfi}, we give a brief general introduction to LFIs followed by a presentation of the LFI to be used in this paper $\mb$. Next, possibility theory is discussed briefly in Section \ref{sec:poss theory}. The main material of the article is in Section \ref{sec:poslog}, where we introduce a possibilistic logic $\plog$ over $\mb$ and discuss its features. Sections \ref{sec:poslog soundness} and \ref{sec:poslog comp} are devoted to the soundness and completeness, respectively, of $\plog$. A measure of the triviality of a set of possibilistic formulas is introduced in Section \ref{sec:poslog comp}. We continue to introduce some more measures, viz., of the contradictoriness, the meta-level and the object-level consistencies of a set of possibilistic formulas in Section \ref{sec:poslog notions}. These are followed by examples which illustrate the applications of these newly introduced concepts.
 
\section{Logics of Formal Inconsistency}\label{sec:lfi}

Let $\lang$ be the set of formulas generated inductively over a denumerable set of variables $V$ using a finite set of connectives or operators, called the \emph{signature}. A logic $\logl$, with signature $\Sigma$, is then a pair $\logl=\langle\mathcal{L},\vdash_{\mathscr{L}}\rangle$, where $\vdash_{\mathscr{L}}\,\subseteq\pow(\mathcal{L})\times\mathcal{L}$ is the consequence relation of the logic. The subscript $\logl$ is dropped from $\vdash_\logl$ when there is no ambiguity regarding the logic under discussion.

\begin{dfn}\label{dfn:Tarski}
A logic $\logl=\langle\lang,\vdash\rangle$ is called \emph{Tarskian} if it satisfies the following properties. For every $\Gamma\cup\Delta\cup\{\alpha\}\subseteq\lang$,
    \begin{enumerate}[label=(\roman*)]
        \item if $\alpha\in\Gamma$, then $\Gamma\vdash\alpha$ (\emph{Reflexivity});
        \item if $\Gamma\vdash\alpha$ and $\Gamma\subseteq\Delta$, then $\Delta\vdash\alpha$ (\emph{Monotonicity});
        \item if $\Delta\vdash\alpha$ and $\Gamma\vdash\beta$ for all $\beta\in\Delta$, then $\Gamma\vdash\alpha$ (\emph{Transitivity/ Cut}).
    \end{enumerate}
    $\logl$ is said to be \emph{finitary} if for every $\Gamma\cup\{\alpha\}\subseteq\lang$, $\Gamma\vdash\alpha$ implies that there exists a finite $\Gamma_0\subseteq\Gamma$ such that $\Gamma_0\vdash\alpha$.

    $\logl$ is said to be \emph{structural} if for every $\Gamma\cup\{\alpha\}\subseteq\lang$, $\Gamma\vdash\alpha$ implies that $\sigma[\Gamma]\vdash\sigma(\alpha)$, for every substitution $\sigma$ of formulas for variables.

    The logic $\logl$ is called \emph{standard} if it is Tarskian, finitary and structural.
\end{dfn}

 As mentioned earlier, LFIs are paraconsistent logics, i.e., logics with non-explosive negations. Moreover, these logics use a unary \emph{consistency} operator, usually denoted by $\circ$. The following is a simplified definition of an LFI. A more general definition can be found in \cite{Carnielli2007,CarnielliConiglio2016}.

\begin{dfn}\label{def:lfi}
Let $\logl=\langle\mathcal{L},\vdash\rangle$ be a standard logic with a signature containing a negation $\neg$ and a primitive or defined unary consistency operator $\circ$. Then, $\logl$ is said to be a \emph{Logic of Formal Inconsistency (LFI)} with respect to $\neg$ and $\circ$ if the following conditions hold.

\begin{enumerate}[label=(\roman*)]
    \item $\{\varphi,\neg\varphi\}\not\vdash\psi$ for some $\varphi,\psi\in\mathcal{L}$.
    \item There exist $\varphi,\psi\in\mathcal{L}$ such that \begin{enumerate}
        \item $\{\circ\varphi,\varphi\}\not\vdash\psi$;
        \item $\{\circ\varphi,\neg\varphi\}\not\vdash\psi$.
    \end{enumerate}
    \item $\{\circ\varphi,\varphi,\neg\varphi\}\vdash\psi$ for all $\varphi,\psi\in\mathcal{L}$.
\end{enumerate}
\end{dfn}

$\mb=\langle\lang,\vdash_\mb\rangle$ is an LFI with signature $\Sigma=\{\land, \lor, \limp, \neg, \circ\}$, where $\neg$ and $\circ$ are unary operators, and the rest are binary operators. The operators $\neg,\land,\lor,\limp$ are, as usual, called \emph{negation, conjunction, disjunction}, and \emph{implication}. The $\circ$ operator is the \emph{consistency} operator -- the intended interpretation of $\circ\alpha$, where $\alpha\in\lang$ is any formula, is that ``$\alpha$ is consistent.'' For more on the consistency operator, see \cite{Carnielli2007,CarnielliConiglio2016}. The logic $\mb$ is induced by the following Hilbert system.

\textsc{Axiom Schema}

\begin{enumerate}
    \item $\alpha\limp(\beta\limp\alpha)$
    \item $(\alpha\limp(\beta\limp\gamma))\limp((\alpha\limp\beta)\limp(\alpha\limp\gamma))$
    \item $\alpha\limp(\beta\limp(\alpha\land\beta))$
    \item $(\alpha\land\beta)\limp\alpha$
    \item $(\alpha\land\beta)\limp\beta$
    \item $\alpha\limp(\alpha\lor\beta)$
    \item $\beta\limp(\alpha\lor\beta)$
    \item $(\alpha\limp\gamma)\limp((\beta\limp\gamma)\limp(\alpha\lor\beta)\limp\gamma)$
    \item $(\alpha\limp\beta)\lor\alpha$
    \item $\alpha\lor\neg\alpha$
    \item \label{ax:bc1}$\circ\alpha\limp(\alpha\limp(\neg\alpha\limp\beta))$
    \item \label{ax:ciw}$\circ\alpha\lor (\alpha\land\neg\alpha)$
\end{enumerate}

\textsc{Inference Rule}

     $\begin{array}{lcl}
          \alpha&&\alpha\limp\beta\\
          \hline
          &\beta
     \end{array}\qquad$[Modus ponens (MP)]

\begin{rem}
    \begin{enumerate}
        \item The logic $\mb$ is the minimal extension of the basic LFI $\mathbf{mbC}$ that guarantees that the truth values of $\alpha$ and $\neg \alpha$ completely determine the truth value of $\circ\alpha$. This is the not the case in $\mathbf{mbC}$. Axioms of $\mathbf{mbC}$ can be obtained by deleting the axiom \ref{ax:ciw} from the above axiom schema. Contradiction is a sufficient, but not necessary, condition for inconsistency in $\mb$. Thus, it distinguishes between inconsistency and contradictoriness, which is one of the important features of LFIs. As mentioned earlier, more details about the LFI $\mb$ can be found in \cite{CarnielliConiglio2016}. 

        \item As a result of axiom \ref{ax:bc1}, a bottom formula $\bot$ is always definable in $\mb$ from any given formula $\beta$ as follows: $\bot:=\beta\land(\neg\beta\land\circ\beta)$. For any formula $\alpha$, we can then define $\sim \alpha$, as $\sim\alpha:=\alpha\limp\bot$. The unary operator $\sim$ behaves like classical negation. This is discussed in detail in \cite{CarnielliConiglio2016}.
    \end{enumerate}
\end{rem}

Given $\Gamma\cup\{\varphi\}\subseteq\lang$, $\varphi$ is said to be a \emph{syntactic consequence} of $\Gamma$, written as $\Gamma\vdash_\mb\varphi$, if there is a \emph{derivation} of $\varphi$ from $\Gamma$, i.e., a finite sequence $(\varphi_1,\ldots,\varphi_n)$ of elements in $\lang$ such that $\varphi_n=\varphi$, and for each $1\le i\le n$, $\varphi$ is either an instance of an axiom, or a member of $\Gamma$, or obtained by an application of MP on $\varphi_j,\varphi_k$, where $1\le j,k< i$. If $\Gamma=\emptyset$, then $\varphi$ is called a \emph{theorem}.

\begin{rem}
    The Deduction theorem holds in $\mb$, i.e., for $\Gamma\cup\{\alpha,\beta\}\subseteq\lang$, $\Gamma\cup\{\alpha\}\vdash_\mb\beta$ iff $\Gamma\vdash_\mb\alpha\limp\beta$. This can be proved using the axioms (i) and (ii) and modus ponens.
\end{rem}
    
We next come to the semantic characterization of $\mb$. To that end, we first define $\mb$-valuations as follows.

\begin{dfn}
    A function $v:\lang\to\{0,1\}$ is an \emph{$\mb$-valuation} if it satisfies the following clauses.
          \begin{enumerate}
              \item $v(\alpha\land\beta)=1$ iff $v(\alpha)=1$ and $v(\beta)=1$;
              \item $v(\alpha\lor\beta)=1$ iff $v(\alpha)=1$ or $v(\beta)=1$;
              \item $v(\alpha\limp\beta)=1$ iff $v(\alpha)=0$ or $v(\beta)=1$;
              \item $v(\neg\alpha)=0$ implies $v(\alpha)=1$;
              \item $v(\circ\alpha)=1$ iff $v(\alpha)=0$ or $v(\neg\alpha)=0$.
          \end{enumerate}
\end{dfn}

An $\mb$-valuation $v$ is said to \emph{satisfy} $\varphi\in\lang$, if $v(\varphi)=1$. $v$ is said to satisfy $\Gamma\subseteq\lang$ if it satisfies every member of $\Gamma$. An $\mb$-formula $\varphi$ (respectively, a set $\Gamma\subseteq\lang$) is said to be \emph{satisfiable}, if there exists an $\mb$-valuation $v$ that satisfies it.

Given $\Gamma\cup\{\varphi\}\subseteq\lang$, $\varphi$ is said to be a \emph{semantical consequence} of $\Gamma$, written as $\Gamma\models_\mb\varphi$, if for every $\mb$-valuation $v$, $v$ satisfies $\varphi$, whenever $v$ satisfies $\Gamma$.

The logic $\mb$ is sound and complete with respect to the above mentioned semantics, i.e., for $\Gamma\cup\{\alpha\}\subseteq\lang$, $\Gamma\vdash_\mb\alpha$ iff $\Gamma\models_\mb\alpha$. The proofs can be found in \cite[Chapters 2, 3]{CarnielliConiglio2016}.

\begin{rem}\label{rem:unsat=trv}
    It can be proved that for any $\Gamma\subseteq\lang$, $\Gamma$ is not satisfiable iff $\Gamma\models_\mb\psi$, and hence, by completeness, $\Gamma\vdash_\mb\psi$, for all $\psi\in\lang$. Thus, a set of $\mb$-formulas is not satisfiable iff it is trivial.
\end{rem}

\begin{rem}
    As mentioned in the introduction, the LFIs $\mathbf{C_1}$ and $\mathbf{Cie}$ have been used for similar studies on handling uncertainty via possibility and necessity measures in \cite{besnard1994possibility} and \cite{CarnielliBueono-Soler2021}, respectively. This begs for a comparison between these logics and $\mb$. In $\mathbf{C_1}$, $\circ\alpha$ is an abbreviation for the formula $\neg(\alpha\land\neg \alpha)$, thus fading the difference between the notions of non-contradictoriness and consistency. The logic $\mathbf{Cie}$ also does not distinguish between inconsistency and contradiction. The logic $\mb$, on the other hand, is equipped with the ability to make this distinction. This is our primary reason for considering $\mb$ here. One might think that the basic LFI $\mathbf{mbC}$ could be used instead. But $\mathbf{mbC}$ is too weak semantically in the sense that truth values of $\alpha$ and $\neg\alpha$ are not sufficient to determine the value of $\circ\alpha$.  
\end{rem}

\section{Possibility Theory}\label{sec:poss theory}

The  theory of possibility originated in \cite{Zadeh1978} based on the theory of fuzzy sets. The concept of a \emph{possibility distribution} is defined as a fuzzy restriction on the values that may be assigned to a variable. Then, the author links a \emph{possibility measure} to the possibility distribution. In \cite{DuboisLangPrade1994}, the authors consider a possibility distribution over the set of interpretations that formally represents a fuzzy set. 

In this article, we will also consider a possibility distribution function over the set of interpretations (or valuations) of a given logic, but we will not see it as a fuzzy set. As mentioned in \cite[Chapter 1]{Hajek1998}, fuzziness is imprecision or vagueness and the truth of a fuzzy proposition is a matter of degree. This is different from uncertainty which is measured with the help of probability and possibility. Probability and possibility are, however, two different concepts. An event might be possible but its probability could be zero. For example, there is always a possibility of rain at a location but its probability could be anywhere between zero and one. On the other hand, if an event is impossible, then its probability must be zero. For example, it is impossible that one would see a giant dinosaur now. Hence, the probability of spotting a giant dinosaur is zero.

Moreover, probability is self-dual, i.e., for any event $\varphi$, `not $\varphi$ is not probable' is equivalent to saying that `$\varphi$ is probable.' This is not the case in possibility. Thus, `it is not possible that not $\varphi$' does not imply that `it is possible that $\varphi$', but `it is necessary that $\varphi$' \cite{CarnielliBueono-Soler2021}. Possibility theory uses a pair of dual functions: possibility and necessity measures unlike probability theory. To further draw out the distinction between possibility and probability, the ideas of \emph{probability qualification}, and \emph{possibility qualification} have been discussed in \cite{Zadeh1978}. Linguistic probability-values describe likelihood, e.g., likely, very likely, very unlikely, etc.; while linguistic possibility values describe to what extent something is possible, e.g., possible, quite possible, slightly possible, impossible, etc. 

We will understand a possibility distribution here as a function measuring possibility over the scale $[0,1]$. The interval $[0,1]$ can be further generalized to any totally ordered set. 

\section{Possibilistic logic}\label{sec:poslog}

We will now develop a possibilistic logic over the LFI $\mb=(\lang,\vdash_\mb)=(\lang,\models_\mb)$ that we have described earlier in Section \ref{sec:lfi}. Let $\val$ be the set of all $\mb$-valuations.

The infimum and supremum of the empty set do not exist, in general. In case of real numbers, they are often defined as $\infty$ and $-\infty$, respectively. Since we are working with the interval $[0,1]$ in this article, we define $\inf\emptyset=1$ and $\sup\emptyset=0$.

\subsection{Language of PosLog}

A \emph{necessity-valued formula} is a pair $(\varphi;\,x)$ where $\varphi\in\lang$ is a formula of the logic $\mb$ and $x\in(0,1]$. The language of $\plog$, the possibilistic logic over $\mb$, consists of necessity-valued formulas. The pair $(\varphi;\,x)$ then expresses that $\varphi$ is certain at least up to degree $x$. This $x$ is called the \textit{weight} of the formula $\varphi$. This is called valuation in \cite{DuboisLangPrade1994} but here we reserve `valuation' for something else. It is to be noted that weighted formulas of the form $(\varphi;\,0)$ are not written as they do not contain any information. Propositional variables and weights are written with English letters $p,q,r,\ldots$ and $x,y,z,\ldots$, respectively, while the rest of the formulas of $\mb$ are expressed by Greek letters. The formulas of $\plog$, i.e., the necessity-valued formulas are also called \emph{possibilistic formulas}. The set of all possibilistic formulas is denoted by $\fm$.

Suppose $\f$ is a set of possibilistic formulas. Then, the \emph{projection} $\f^*$ of $\f$ is the set of $\mb$-formulas obtained from $\f$ by ignoring the weights, i.e., $\f^*=\{\varphi\in\lang\mid\,(\varphi;\,x)\in\f\}$. 

Next, we define the \emph{$x$-cut} and the \emph{strict $x$-cut} of $\f$, denoted respectively, by $\f_{x}$ and $\f_{\ov{x}}$, as follows.
\[
\begin{array}{rcl}
\f_{x}&=&\{(\varphi;\,y)\in\f\mid\,y\geq x\};\\
\f_{\ov{x}}&=&\{(\varphi;\,y)\in\f\mid\,y>x\}.
\end{array}
\]
The projections $\f_{x}^{*}$ and $\f_{\ov{x}}^{*}$ of these are then as follows.
\[
\begin{array}{rcl}
     \f_{x}^{*}&=&\{\varphi\in\lang\mid\,(\varphi\;y)\in\f\mbox{ and } y\geq x\};\\
     \f_{\ov{x} }^{*}&=&\{\varphi\in\lang\mid\,(\varphi\;y)\in\f\mbox{ and }y>x\}. 
\end{array}
\]

\subsection{Semantics}

A \textit{possibility distribution} $\pi$ over the set of $\mb$-valuations $\val$ is a map from $\val$ to $[0,1]$. Intuitively speaking, $\pi$ assigns a value to each $\mb$-valuation, that represents the possibility of that valuation or state.

\begin{dfn}\label{def:poss'meas}
The \textit{possibility measure} $\Pi$ induced by a possibility distribution $\pi$ is a function from $\lang$ to $[0,1]$, defined as follows.
\[
\Pi(\varphi)=\left\{\begin{array}{ll}
\sup\{\pi(w)\mid\,w(\psi)=0,w\in\val\},&\hbox{if $\varphi$ is of the form $\neg\psi$},\\
\sup\{\pi(w)\mid\,w(\varphi)=1,w\in\val\},&\hbox{otherwise}.
\end{array}\right.
\]
\end{dfn}
 
The underlying possibility distribution $\pi$ assigns a value to each valuation or state representing its intrinsic possibility. Then, the  possibility measure $\Pi$ of any given formula is evaluated as the degree of plausibility of the best possible valuation or state in which the formula is true. A formula is thus considered as possible as the most possible valuation that verifies it. The truth of not-$\varphi$ is interchangeable with the falsity of $\varphi$ in classical possibility theory. Here, however, that is not the case as in $\mb$, the behavior of negation is not classical. In $\mb$, for any $w\in\val$ and $\varphi\in\lang$, $w(\varphi)=0$ implies that $w(\neg \varphi)=1$ but the converse is not true. Thus, $\varphi$ and $\neg\varphi$ can both be true simultaneously. 

Defining the possibility measure of a formula of the form $\neg \psi$ using valuations where $\psi$ is false instead of where $\neg\psi$ is true ensures that the possibility of a negated formula considers only the valuations in which $\psi$ is false, and hence, $\neg\psi$ is true (follows from the definition of $\mb$-valuations). This prevents valuations where $\psi$ and $\neg\psi$ are simultaneously true (a possible situation in $\mb$), from being considered, thus avoiding contradictions. So, this condition ensures that the possibility of a negated formula is strictly limited to the failure of the underlying proposition, thus preventing contradictions from distorting the possibility calculations. 

\begin{dfn}\label{def:nec'meas}
The \emph{necessity measure} $N:\lang\to[0,1]$, induced by a possibility distribution $\pi$, is defined as follows.
\[
N(\varphi)= \inf\{1-\pi(w)\mid\,w(\varphi)=0,w\in\val\}
\]
\end{dfn}

Thus, the necessity measure of a formula computes $1-\pi(w)$ for all those valuations $w$ where the formula fails to be true and then takes the infimum of these values.  In case the underlying logic is classical, the necessity measure induced by a possibility distribution $\pi$ is the dual of the possibility measure $\Pi$ induced by $\pi$, and can be defined as $N(\varphi)=1-\Pi(\neg\varphi)$.

\begin{lem}\label{lem: nec'ty axioms proof}
Suppose $\varphi,\psi\in\lang$ are formulas of $\mb$. Let $\pi$ be a possibility distribution and $N$ be the necessity measure induced by $\pi$. Then the following are true.

\begin{enumerate}[label=(\roman*)]
    \item \label{lem:com'son part} If $\varphi\models_{\mb}\psi$ then $N(\varphi)\leq N(\psi)$.
    \item \label{lem:eq-nec} If $\varphi\models_\mb\psi$ and $\psi\models_\mb\varphi$, then $N(\varphi)=N(\psi)$.
    \item \label{lem:tautology} If $\models_\mb\varphi$, then $N(\varphi)=1$.
    \item \label{lem: and min part}$N(\varphi\land\psi)=\min\{N(\varphi),N(\psi)\}$.
\end{enumerate}
\end{lem}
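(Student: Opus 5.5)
The plan is to work directly from Definition \ref{def:nec'meas}, writing $F(\varphi)=\{w\in\val\mid w(\varphi)=0\}$. Then $N(\varphi)=\inf\{1-\pi(w)\mid w\in F(\varphi)\}$, with the convention $\inf\emptyset=1$. Every item will reduce to a comparison of these sets of falsifying valuations, together with the elementary fact that the infimum over a smaller index set is at least as large.

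For \ref{lem:com'son part}, I will first show that $\varphi\models_\mb\psi$ gives $F(\psi)\subseteq F(\varphi)$. Take $w\in\val$ with $w(\psi)=0$. If $w(\varphi)=1$, then $\varphi\models_\mb\psi$ would force $w(\psi)=1$, which is impossible, so $w(\varphi)=0$. Hence $\{1-\pi(w)\mid w\in F(\psi)\}\subseteq\{1-\pi(w)\mid w\in F(\varphi)\}$, and taking infima reverses the inclusion, so $N(\varphi)\le N(\psi)$. This also covers $F(\psi)=\emptyset$, where $N(\psi)=1$, the maximal value.

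Item \ref{lem:eq-nec} then follows by applying \ref{lem:com'son part} in both directions. For \ref{lem:tautology}, if $\models_\mb\varphi$ then every $\mb$-valuation satisfies $\varphi$, so $F(\varphi)=\emptyset$. The convention $\inf\emptyset=1$ then gives $N(\varphi)=1$.

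For \ref{lem: and min part}, I will use the first clause of the definition of $\mb$-valuations: $w(\varphi\land\psi)=0$ iff $w(\varphi)=0$ or $w(\psi)=0$. So $F(\varphi\land\psi)=F(\varphi)\cup F(\psi)$. The infimum of a function over a union of two sets is the minimum of the two infima, so $N(\varphi\land\psi)=\min\{N(\varphi),N(\psi)\}$. The one point needing care is when one or both of the sets is empty. This works because $\inf\emptyset=1$ is the top element of $[0,1]$, so an empty piece contributes the neutral value $1$ to the minimum. Alternatively, $\le$ follows from \ref{lem:com'son part}, since $\varphi\land\psi\models_\mb\varphi$ and $\varphi\land\psi\models_\mb\psi$, and $\ge$ follows from the union decomposition. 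No step is a genuine obstacle; the only subtlety is the empty-set convention, which I will check explicitly in \ref{lem:com'son part} and \ref{lem: and min part}.
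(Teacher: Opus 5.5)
Your proposal is correct and follows essentially the same route as the paper. The paper likewise compares the sets of values $1-\pi(w)$ over falsifying valuations, using the inclusion for (i), symmetry for (ii), and $\inf\emptyset=1$ for (iii). For (iv) it uses the union decomposition $\inf(A\cup B)=\min\{\inf A,\inf B\}$, and your explicit check of the empty-set cases simply spells out a detail the paper leaves implicit.
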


 \begin{proof}
 Let $A=\{1-\pi(w)\mid\,w(\varphi)=0,w\in\val\}$ and $B=\{1-\pi(w)\mid\,w(\psi)=0,w\in\val\}$.
\begin{enumerate}
 \item  Suppose $\varphi\models_{\mb} \psi$. Let $w\in\val$ such that $w(\psi)=0$. Then, $w(\varphi)=0$, which implies that $B\subseteq A$. So, $\inf A\le\inf B$. Hence, $N(\varphi)\leq N(\psi)$. 

 \item This follows from part \ref{lem:com'son part}.

 \item Since $\models_\mb\varphi$, there does not exist a $w\in\val$ such that $w(\varphi)=0$. Hence, $A=\emptyset$, and so, by our assumed convention, $N(\varphi)=\inf A=1$.

 \item Let $C=\{1-\pi(w)\mid\,w(\varphi\land\psi)=0,w\in\val\}$. 

 Suppose $w\in\val$ such that $w(\varphi\land\psi)=0$. Then, by definition of an $\mb$-valuation, either $w(\varphi)=0$ or $w(\psi)=0$. This implies that $C= A\cup B$. Now, by the properties of infimum, $\inf C=\inf A\cup B=\min\{\inf A, \inf B\}$. Thus, $ N(\varphi\land\psi)=\min\{N(\varphi),N(\psi)\}$.
\end{enumerate}
 \end{proof}

\begin{dfn}
    A possibility distribution $\pi$ over the set of $\mb$-valuations $\val$ is said to \textit{satisfy} a possibilistic formula $\Phi=(\varphi;\, x)\in\fm$, iff $N(\varphi)\geq x$, where $N$ is the necessity measure induced by $\pi$. This is denoted by $\pi\models\Phi$.
 
 A possibility distribution $\pi$ over $\val$ is said to satisfy a set of possibilistic formulas $\f\subseteq\fm$, written as $\pi\models\f$, iff  for all $\Phi\in\f$, $\pi\models\Phi$.

 Finally, $\Phi\in\fm$ is said to be a logical consequence of $\f\subseteq\fm$, iff any possibility distribution function that satisfies $\f$ also satisfies $\Phi$, i.e., for all $\pi$, if $\pi\models\f$ then $\pi\models\Phi$. This is denoted by $\f\models\Phi$.

 If $\emptyset\models\Phi$, then we write $\models\Phi$. In this case, it is easy to check that $\pi\models\Phi$, for every possibility distribution $\pi$ over $\val$.
\end{dfn}

\begin{lem}\label{lem:poslog tarskian}
    $\plog$ developed over the logic $\mb$ satisfies the following conditions. For any $\f\cup\mathcal{G}\cup\{\Phi\}\subseteq\fm$,
    \begin{enumerate}
        \item if $\Phi\in \f$, then $\f\models\Phi$ (Reflexivity);
        \item if $\f\models\Phi$ and $\f\subseteq \mathcal{G}$, then $\mathcal{G}\models\Phi$ (Monotonicity);
        \item if $\mathcal{G}\models\Phi$ and $\f\models\Psi$ for all $\Psi\in \mathcal{G}$, then $\mathcal{F}\models\Phi$ (Transitivity).
    \end{enumerate}
\end{lem}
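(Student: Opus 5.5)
All three properties follow directly from the definition of $\models$ on possibilistic formulas. That definition is a universally quantified implication over possibility distributions, so each property reduces to a set-theoretic observation about which distributions satisfy which sets. I will fix an arbitrary possibility distribution $\pi$ over $\val$, with induced necessity measure $N$, and verify each clause pointwise. No property of $\mb$ beyond the definition of $N$ is needed; in particular Lemma \ref{lem: nec'ty axioms proof} is not required.

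For reflexivity, suppose $\Phi\in\f$ and $\pi\models\f$. By definition, $\pi\models\Psi$ for every $\Psi\in\f$, hence in particular $\pi\models\Phi$. Since $\pi$ was arbitrary, $\f\models\Phi$.

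For monotonicity, suppose $\f\models\Phi$ and $\f\subseteq\mathcal{G}$, and let $\pi\models\mathcal{G}$. Then $\pi$ satisfies every member of $\mathcal{G}$, and so every member of $\f$, i.e., $\pi\models\f$. By the hypothesis $\f\models\Phi$ we get $\pi\models\Phi$, and therefore $\mathcal{G}\models\Phi$.

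For transitivity, suppose $\mathcal{G}\models\Phi$ and $\f\models\Psi$ for all $\Psi\in\mathcal{G}$, and let $\pi\models\f$. For each $\Psi\in\mathcal{G}$ the hypothesis $\f\models\Psi$ gives $\pi\models\Psi$, so $\pi\models\mathcal{G}$. Then $\mathcal{G}\models\Phi$ yields $\pi\models\Phi$, hence $\f\models\Phi$.

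There is no real obstacle here; the only point requiring care is to apply the definition of satisfaction of a set (satisfying every member) consistently.
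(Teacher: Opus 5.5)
Your proof is correct and matches the paper's argument essentially step for step: each clause is verified by fixing an arbitrary possibility distribution $\pi$ and unwinding the definition of $\pi\models\f$ as ``$\pi$ satisfies every member of $\f$.'' As you note, nothing specific to $\mb$ or to the necessity measure is needed.
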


\begin{proof}
    \begin{enumerate}
        \item Suppose $\pi$ is a possibility distribution that satisfies $\f$. Then, $\pi$ satisfies every possibilistic formula in $\f$. So, since $\Phi\in\f$, in particular, $\pi\models\Phi$. Thus, $\f\models\Phi$.
        
        \item Suppose $\pi$ is a possibility distribution that satisfies $\mathcal{G}$. Then, by definition, it satisfies every element of $\mathcal{G}$. Since $\f\subseteq \mathcal{G}$, $\pi$ satisfies all formulas of $\f$, i.e., $\pi\models\f$. Now, as $\f\models\Phi$, this implies that $\pi\models\Phi$. Thus, $\mathcal{G}\models\Phi$.  
        
        \item Suppose $\pi$ is a possibility distribution that satisfies $\f$. Now, since $\f\models\Psi$ for every $\Psi\in\mathcal{G}$, this implies that $\pi\models\Psi$ for every $\Psi\in\mathcal{G}$. Thus, $\pi\models\mathcal{G}$. Then, as $\mathcal{G}\models\Phi$, $\pi\models\Phi$. Hence, $\f\models\Phi$. 
    \end{enumerate}
\end{proof} 

\begin{rem}
    The above lemma shows that $\plog$ satisfies the Tarskian conditions discussed in Definition \ref{dfn:Tarski}.
\end{rem}

\begin{lem}\label{lem:properties}
 Suppose $\varphi\in\lang$ is an $\mb$-formula. Then, the following properties hold. 
 \begin{enumerate}
     \item $(\varphi;\,x)\models(\varphi;\,y)$ for all $0<y\le x\le1$;
     \item $\models(\varphi;\,x)$, for all $x\in(0,1]$, iff $\varphi$ is an $\mb$-tautology.
 \end{enumerate}
\end{lem}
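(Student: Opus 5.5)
Both parts follow directly from the definition of satisfaction $\pi\models(\varphi;\,x)$ iff $N(\varphi)\ge x$, together with Lemma \ref{lem: nec'ty axioms proof}.

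For part (i), I fix an arbitrary possibility distribution $\pi$ with $\pi\models(\varphi;\,x)$, so that $N(\varphi)\ge x$ for the induced necessity measure $N$. Since $y\le x$, we get $N(\varphi)\ge y$, i.e., $\pi\models(\varphi;\,y)$. As $\pi$ was arbitrary, $(\varphi;\,x)\models(\varphi;\,y)$.

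For part (ii), the backward direction is immediate. If $\varphi$ is an $\mb$-tautology, i.e., $\models_\mb\varphi$, then Lemma \ref{lem: nec'ty axioms proof}\ref{lem:tautology} gives $N(\varphi)=1\ge x$ for every $\pi$ and every $x\in(0,1]$. Hence $\models(\varphi;\,x)$ for all such $x$.

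The forward direction is the only step needing a construction. I will argue by contraposition: suppose $\varphi$ is not an $\mb$-tautology, so there is some $w_0\in\val$ with $w_0(\varphi)=0$. Define the possibility distribution $\pi$ by $\pi(w_0)=1$ and $\pi(w)=0$ for all $w\ne w_0$ (the constant distribution $\pi\equiv 1$ would also do). Then $1-\pi(w_0)=0$ belongs to the set $\{1-\pi(w)\mid w(\varphi)=0,\ w\in\val\}$, and all its elements are nonnegative, so $N(\varphi)=0$. Hence $\pi\not\models(\varphi;\,x)$ for every $x\in(0,1]$, because $x>0$. Thus $\models(\varphi;\,x)$ fails for every $x$, and in particular not for all $x$.

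This argument actually shows more: $\models(\varphi;\,x)$ for some single $x\in(0,1]$ already forces $\varphi$ to be a tautology. It relies on weights being strictly positive, which is built into the language of $\plog$. I expect no real obstacle, only the care to choose $\pi$ so that the infimum reaches $0$.
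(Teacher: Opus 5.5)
Your proof is correct and follows the paper's structure: part (i) and the backward direction of (ii) match the paper, and for the forward direction you also construct a distribution that is large on a valuation falsifying $\varphi$. The only difference is the choice of distribution. The paper takes $\pi\equiv 0.5$, which gives $N(\varphi)=0.5$ and so refutes only weights $x>0.5$. Your choice forces $N(\varphi)=0$, which yields the stronger fact that $\models(\varphi;\,x)$ for a single $x\in(0,1]$ already makes $\varphi$ an $\mb$-tautology; your distribution is also normalized, should one insist on that.
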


\begin{proof}
    \begin{enumerate}[label=(\roman*)]
        \item Let $\pi$ be a possibility distribution such that $\pi\models(\varphi;\,x)$. Then, $N(\varphi)\ge x$, where $N$ is the necessity measure induced by $\pi$. So, for any $y\le x$, $N(\varphi)\ge y$. Thus, $\pi\models(\varphi;\,y)$.

        \item Suppose $\models_\mb\varphi$. Let $\pi$ be a possibility distribution and $N$ the necessity measure induced by $\pi$. Now, since $\models_\mb\varphi$, by Lemma \ref{lem: nec'ty axioms proof}\ref{lem:tautology}, $N(\varphi)=1$. So, for any $x\in(0,1]$, $N(\varphi)\ge x$, which implies that $\pi\models(\varphi;\,x)$. Since $\pi$ was an arbitrary possibility distribution, this implies that every possibility distribution satisfies $(\varphi;\,x)$. Thus, $\models(\varphi;\,x)$ for all $x\in(0,1]$.

        Conversely, suppose $\models(\varphi;\,x)$ for all $x\in(0,1]$ but $\not\models_\mb\varphi$. Let $\pi:\val\to[0,1]$ be a possibility distribution defined by $\pi(w)=0.5$, for all $w\in\val$. Since $\not\models_\mb\varphi$, there exists $w\in\val$ such that $w(\varphi)=0$. Thus, $\{1-\pi(w)\mid\,w(\varphi)=0,w\in\val\}\neq\emptyset$, and hence, $N(\varphi)=\inf\{1-\pi(w)\mid\,w(\varphi)=0,w\in\val\}=0.5$. This implies that $\pi\not\models (\varphi;\,x)$, for any $x>0.5$, and so, $\not\models(\varphi;\,x)$ for $x>0.5$. This, however, contradicts our assumption. Hence, $\models_\mb\varphi$.
    \end{enumerate}
\end{proof}

\begin{lem}\label{lem:equivalence}
    Suppose $\f\subseteq\fm$ be a set of possibilistic formulas and $\varphi,\psi\in\lang$ be $\mb$-formulas. If $\varphi\models_\mb\psi$ and $\psi\models_\mb\varphi$, then for any $x\in(0,1]$, $\f\models(\varphi;\,x)$ iff $\f\models(\psi;\,x)$.
\end{lem}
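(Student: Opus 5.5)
The plan is to reduce the claim to Lemma \ref{lem: nec'ty axioms proof}\ref{lem:eq-nec}: semantically equivalent $\mb$-formulas receive the same necessity value under every possibility distribution. Satisfaction of a possibilistic formula depends only on that value, so the two consequence statements must coincide.

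First, fix $x\in(0,1]$ and assume $\f\models(\varphi;\,x)$; we show $\f\models(\psi;\,x)$. Take an arbitrary possibility distribution $\pi$ with $\pi\models\f$, and let $N$ be the necessity measure induced by $\pi$. By the assumption, $\pi\models(\varphi;\,x)$, so $N(\varphi)\ge x$. Since $\varphi\models_\mb\psi$ and $\psi\models_\mb\varphi$, Lemma \ref{lem: nec'ty axioms proof}\ref{lem:eq-nec} gives $N(\varphi)=N(\psi)$. Hence $N(\psi)\ge x$, i.e.\ $\pi\models(\psi;\,x)$. As $\pi$ was an arbitrary model of $\f$, this yields $\f\models(\psi;\,x)$.

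The converse direction is the same argument with the roles of $\varphi$ and $\psi$ exchanged, since the hypothesis is symmetric in $\varphi$ and $\psi$. Alternatively, one can use Lemma \ref{lem: nec'ty axioms proof}\ref{lem:com'son part} twice directly, obtaining $N(\varphi)\le N(\psi)$ and $N(\psi)\le N(\varphi)$.

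I do not expect a real obstacle here. The only point needing care is that the equality $N(\varphi)=N(\psi)$ must hold for the necessity measure of \emph{every} distribution $\pi$, not just one. This is immediate, because Lemma \ref{lem: nec'ty axioms proof} is stated for an arbitrary $\pi$.
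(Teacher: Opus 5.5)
Your proposal is correct and follows essentially the same route as the paper: fix an arbitrary $\pi\models\f$, use Lemma \ref{lem: nec'ty axioms proof}\ref{lem:eq-nec} to get $N(\varphi)=N(\psi)$, and transfer the bound $N(\varphi)\ge x$ to $\psi$. The converse is then handled by the symmetric argument, exactly as in the paper.
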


\begin{proof}
    Suppose $\f\models(\varphi;\,x)$ and $\pi$ is a possibility distribution such that $\pi\models\f$. Then, $\pi\models(\varphi;\,x)$, which implies that $N(\varphi)\ge x$, where $N$ is the necessity measure induced by $\pi$. Now, since $\varphi\models_\mb\psi$ and $\psi\models_\mb\varphi$, by Lemma \ref{lem: nec'ty axioms proof}\ref{lem:eq-nec}, $N(\varphi)=N(\psi)$. So, $N(\psi)\ge x$, and thus, $\pi\models(\psi;\,x)$. Hence, $\f\models(\psi;\,x)$.

    The converse follows by analogous arguments.
\end{proof}

\begin{dfn}
     Suppose $\f\subseteq\fm$ be a set of possibilistic formulas and $\varphi\in\lang$ be an $\mb$-formula. Then, the \emph{weight of $\varphi$ relative to $\f$}, denoted by $\wt(\varphi,\f)$, is defined as follows.
     \[
     \wt(\varphi,\f)=\mathrm{sup}\{x\in(0,1]\mid\,\f\models(\varphi;\,x)\}
     \]
\end{dfn}

\begin{rem}
    Intuitively speaking, $\wt(\varphi,\f)$ is the greatest value of $x$ such that $(\varphi;\,x)$ is a logical consequence of $\f$. This is denoted by $\mathrm{Val}(\varphi,\f)$ in \cite{DuboisLangPrade1994}.
\end{rem}

\begin{exa} Let $\f=\{(\neg\neg p;\,0.7), (p\lor q;\,0.4)\}$, where $p,q\in V$ are variables. The following table lists the distinct $\mb$-valuations for the variables $p,q$.

\begin{center}
 $\begin{array}{|c|c|c|c|c|c|}
           \hline p& q & \neg p& \neg\neg p&p\lor q&\\\hline
             1&1  &1&1&1&v_1 \\\cline{4-4}
             & & &0  &&v_2\\\cline{3-4}
             &  &0 &1&&v_3\\\hline
             1& 0 & 1&1&1&v_4\\\cline{4-4}
             &&&0  &&v_5\\\cline{3-4}
             &&0&1  &&v_6\\\hline
        0&1&1&1&1&v_7\\\cline{4-4}
        &&&0&&v_8\\\hline
        0&0&1&1&0&v_9\\\cline{4-4}
        &&&0&&v_{10}\\\hline
    \end{array}$
\end{center}
    
     Now, a possibility distribution $\pi$ satisfies $\f$, i.e., $\pi\models\f$ iff $N(\neg \neg p)\geq 0.7$ and $N(p\lor q)\geq 0.4$, i.e., $\inf\{1-\pi(w)\mid\,w(\neg\neg p)=0,w\in\val\}\geq 0.7$ and $\inf\{1-\pi(w)\mid\,w( p\lor q)=0,w\in\val\}\geq 0.4$. Thus, $\pi\models\f$ iff $1-\pi(w)\ge0.7$, i.e., $\pi(w)\leq 0.3$, for all $w\in\val$ such that $w(\neg\neg p)=0$, and $1-\pi(w)\ge0.4$, i.e, $\pi(w)\leq 0.6$, for all $w\in\val$ such that $w(p\lor q)=0$.

     Now, we note from the table above that $w(\neg\neg p)=0$ for $w=v_2,v_5,v_8$, and $v_{10}$, while $w(p\lor q)=0$ for $w=v_9$. Thus, $\pi\models\f$ iff $\pi(w)\le0.3$ for $w=v_2,v_5,v_8,v_{10}$ and $\pi(v_9)\le0.6$.
\end{exa}

\begin{rem}
    It may be noted that in classical logic, there are fewer valuations due to more equivalences among formulas. Thus, by using $\mb$ instead of classical logic, we get finer separations via the valuations.
\end{rem}

\begin{dfn}\label{def:posdist'on on formulae}
     Let $\f\subseteq\fm$ be a set of possibilistic formulas. The \emph{possibility distribution induced by $\f$} is then defined as follows. For any $w\in\val$,
\[
\pi_{\f}(w)=\inf\{1-x\mid\,(\varphi;\,x)\in\f,w(\varphi)=0\}.
\]
\end{dfn}

\begin{lem}\label{lem:inducedposs'distr}
    Suppose $\f\subseteq\fm$ and $\pi_\f$ be the possibility distribution induced by $\f$. Then, $\pi_\f\models\f$.
\end{lem}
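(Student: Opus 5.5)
We fix an arbitrary $(\varphi;\,x)\in\f$ and show that $N_\f(\varphi)\ge x$, where $N_\f$ is the necessity measure induced by $\pi_\f$. By Definition \ref{def:nec'meas},
\[
N_\f(\varphi)=\inf\{1-\pi_\f(w)\mid\,w(\varphi)=0,\,w\in\val\}.
\]
It therefore suffices to show that $1-\pi_\f(w)\ge x$, i.e., $\pi_\f(w)\le 1-x$, for every $w\in\val$ with $w(\varphi)=0$. Once this pointwise bound holds, $x$ is a lower bound of the set above, so the infimum is at least $x$.

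We split into two cases.

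\textbf{Case 1: no valuation falsifies $\varphi$.} The set is empty, so by the convention $\inf\emptyset=1$ we get $N_\f(\varphi)=1\ge x$ directly.

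\textbf{Case 2: some $w\in\val$ has $w(\varphi)=0$.} Fix such a $w$. By Definition \ref{def:posdist'on on formulae},
\[
\pi_\f(w)=\inf\{1-y\mid\,(\psi;\,y)\in\f,\,w(\psi)=0\}.
\]
The pair $(\varphi;\,x)$ itself belongs to this index set, since $(\varphi;\,x)\in\f$ and $w(\varphi)=0$. Hence $1-x$ is an element of the set whose infimum is $\pi_\f(w)$, and so $\pi_\f(w)\le 1-x$. Equivalently, $1-\pi_\f(w)\ge x$. Since $w$ was an arbitrary falsifying valuation, $N_\f(\varphi)\ge x$, that is, $\pi_\f\models(\varphi;\,x)$. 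As $(\varphi;\,x)\in\f$ was arbitrary, $\pi_\f\models\f$.

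There is no real obstacle here; the argument is a direct unfolding of the definitions. The one point needing care is the empty-set convention $\inf\emptyset=1$, which enters twice. First, it handles Case 1. Second, it makes $\pi_\f(w)=1$ well defined when $w$ falsifies no formula of $\f$; such $w$ do not affect the argument, because they never appear in the set defining $N_\f(\varphi)$. We should also check that $\pi_\f$ takes values in $[0,1]$, so that it is a legitimate possibility distribution. This holds because each $1-y$ lies in $[0,1)$ for $y\in(0,1]$, and the convention gives the value $1$ in the empty case.
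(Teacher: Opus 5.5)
Your proof is correct and matches the paper's argument essentially step for step. Both split on whether some valuation falsifies $\varphi$, use the convention $\inf\emptyset=1$ in the first case, and in the second case note that $(\varphi;\,x)$ itself lies in the index set, so $\pi_\f(w)\le 1-x$. Your extra check that $\pi_\f$ takes values in $[0,1]$ is a small point the paper leaves implicit.
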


\begin{proof}
    Suppose $(\varphi;\,x)\in\f$. We need to show that $\pi_\f\models(\varphi;\,x)$, i.e., $N_\f(\varphi)\ge x$, where $N_\f$ is the necessity measure induced by $\pi_\f$.

    \textsc{Case 1:} Suppose $w(\varphi)=1$ for all $w\in\val$.

    Then, $\{1-\pi_\f(w)\mid\,w(\varphi)=0, w\in\val\}=\emptyset$, and hence, $N_\f(\varphi)=\inf\{1-\pi_\f(w)\mid\,w(\varphi)=0,w\in\val\}=\inf\emptyset=1\ge x$.

    \textsc{Case 2:} Suppose there exists $w\in\val$ such that $w(\varphi)=0$.

     Now, $\pi_\f(w)=\inf\{1-y\mid\,(\psi,y)\in\f,w(\psi)=0\}\le1-x$, i.e., $1-\pi_\f(w)\ge x$. Thus, $1-\pi_\f(w)\ge x$ for all $w\in\val$ such that $w(\varphi)=0$. Hence, $N_\f(\varphi)=\inf\{1-\pi_\f(w)\mid\,w(\varphi)=0,w\in\val\}\ge x$.

    Thus, in all cases, $N_\f(\varphi)\ge x$, which implies that $\pi_\f\models(\varphi;\,x)$. Since $(\varphi;\,x)\in\f$ was arbitrary, $\pi_\f\models\f$.
\end{proof}

The following theorem presents a necessary and sufficient condition for a set of possibilistic formulas to be satisfied by a possibility distribution function.

 \begin{thm}\label{thm: pi satisfies f}
  Suppose $\pi$ is a possibility distribution over the set of $\mb$-valuations $\val$ and $\f\subseteq\fm$. Then, $\pi\models\f$ if and only if $\pi\leq\pi_{\f}$, i.e., for all $w\in\val, \pi(w)\leq\pi_{\f}(w)$.
 \end{thm}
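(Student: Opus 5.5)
The proof splits into the two directions, and both come down to unwinding the definitions of satisfaction, of the necessity measure $N$ (Definition \ref{def:nec'meas}) and of $\pi_\f$ (Definition \ref{def:posdist'on on formulae}). Throughout I use the convention $\inf\emptyset=1$.

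For the direction ``$\pi\models\f$ implies $\pi\le\pi_\f$'', I fix $w\in\val$. If no $(\varphi;\,x)\in\f$ has $w(\varphi)=0$, then $\pi_\f(w)=\inf\emptyset=1\ge\pi(w)$ and there is nothing to prove. Otherwise, take any $(\varphi;\,x)\in\f$ with $w(\varphi)=0$. Since $\pi\models(\varphi;\,x)$, we have
\[
x\le N(\varphi)=\inf\{1-\pi(u)\mid\,u(\varphi)=0,u\in\val\}\le 1-\pi(w),
\]
so $\pi(w)\le 1-x$. Hence $\pi(w)$ is a lower bound of $\{1-x\mid\,(\varphi;\,x)\in\f,w(\varphi)=0\}$, and therefore $\pi(w)\le\pi_\f(w)$.

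For the converse, assume $\pi\le\pi_\f$. I could invoke Lemma \ref{lem:inducedposs'distr}, which gives $\pi_\f\models\f$, together with the antitonicity of $N$ in $\pi$. The direct route is just as short, so I will argue directly. Fix $(\varphi;\,x)\in\f$. If $\varphi$ is never false, then $N(\varphi)=1\ge x$. Otherwise, for every $w$ with $w(\varphi)=0$, the pair $(\varphi;\,x)$ belongs to the set defining $\pi_\f(w)$, so $\pi(w)\le\pi_\f(w)\le 1-x$. This gives $1-\pi(w)\ge x$ for every such $w$, and taking the infimum yields $N(\varphi)\ge x$, i.e.\ $\pi\models(\varphi;\,x)$.

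I do not expect any real obstacle. The only point needing care is the empty-set convention, which appears in both directions: once when $\pi_\f(w)=1$ because no formula of $\f$ fails at $w$, and once when $N(\varphi)=1$ because $\varphi$ is an $\mb$-tautology. Both cases are settled immediately by that convention.
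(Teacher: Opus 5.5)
Your proof is correct and matches the paper's argument. The forward direction is identical. In the converse you inline the computation $\pi_\f(w)\le 1-x$ for every $w$ with $w(\varphi)=0$, which the paper instead obtains by citing Lemma \ref{lem:inducedposs'distr} together with $N_\f(\varphi)\le N(\varphi)$.
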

 
 \begin{proof}
     Suppose $\pi\models\f$. Then, $\pi\models(\varphi;\,x)$, for all $(\varphi;\,x)\in\f$. This implies that $N(\varphi)\geq x$, for all $(\varphi;\,x)\in\f$, where $N$ is the necessity measure induced by $\pi$. So, for any $(\varphi;\,x)\in\f$, $\inf\{1-\pi(w)\mid\,w(\varphi)=0,w\in\val\}\geq x$, i.e., $\pi(w)\le1-x$ for all $w\in\val$ such that $w(\varphi)=0$. Thus, $\pi(w)\le\inf\{1-x\mid\,(\varphi;\,x)\in\f,w(\varphi)=0\}=\pi_\f(w)$. Hence, $\pi(w)\le\pi_\f(w)$ for all $w\in\val$.

     Conversely, suppose $\pi\le\pi_\f$. Let $(\varphi;\,x)\in\f$. Since $\pi(w)\le\pi_\f(w)$ for all $w\in\val$, $\inf\{1-\pi_\f(w)\mid\,w(\varphi)=0,w\in\val\}\le\inf\{1-\pi(w)\mid\,w(\varphi)=0,w\in\val\}$. Thus, $N_\f(\varphi)\le N(\varphi)$, where $N_\f$ and $N$ are the necessity measures induced by $\pi_\f$ and $\pi$, respectively. Now, by Lemma \ref{lem:inducedposs'distr}, $N_\f(\varphi)\ge x$. Thus, $N(\varphi)\ge x$, which implies that $\pi\models(\varphi;\,x)$. Since $(\varphi;\,x)\in\f$ was arbitrary, this proves that $\pi\models\f$.
 \end{proof}

\begin{cor}\label{cor:pi_FsatF}
    Suppose $\f\cup\{(\varphi;\,x)\}\subseteq\fm$. Then, $\f\models(\varphi;\,x)$ iff $\pi_\f\models(\varphi;\,x)$, where $\pi_\f$ is the possibility distribution induced by $\f$.
\end{cor}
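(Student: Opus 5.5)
The plan is to prove the two directions separately, using Lemma \ref{lem:inducedposs'distr} for the forward implication and Theorem \ref{thm: pi satisfies f} together with the monotonicity of necessity measures in the underlying distribution for the converse.

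For the forward direction, assume $\f\models(\varphi;\,x)$. By Lemma \ref{lem:inducedposs'distr}, $\pi_\f\models\f$. The definition of logical consequence applied to the particular distribution $\pi_\f$ then gives $\pi_\f\models(\varphi;\,x)$ at once.

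For the converse, assume $\pi_\f\models(\varphi;\,x)$, i.e., $N_\f(\varphi)\ge x$, where $N_\f$ is the necessity measure induced by $\pi_\f$. Let $\pi$ be an arbitrary possibility distribution with $\pi\models\f$; we must show $\pi\models(\varphi;\,x)$. By Theorem \ref{thm: pi satisfies f}, $\pi\le\pi_\f$ pointwise on $\val$. Hence, for every $w\in\val$ with $w(\varphi)=0$, we have $1-\pi(w)\ge 1-\pi_\f(w)$. Taking infima over the same index set $\{w\in\val\mid w(\varphi)=0\}$ yields $N(\varphi)\ge N_\f(\varphi)\ge x$, where $N$ is induced by $\pi$. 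This is exactly the comparison already carried out inside the proof of Theorem \ref{thm: pi satisfies f}.

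The only point needing care is the case where the index set is empty, i.e., $\varphi$ is an $\mb$-tautology. There both infima equal $1$ by the convention $\inf\emptyset=1$, so the inequality still holds. Thus $\pi\models(\varphi;\,x)$, and since $\pi$ was arbitrary, $\f\models(\varphi;\,x)$. I expect no genuine obstacle: the whole argument reduces to the fact that $\pi_\f$ is the greatest distribution satisfying $\f$, combined with the antitonicity of $N$ in $\pi$.
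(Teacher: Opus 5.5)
Your proposal is correct and follows essentially the same route as the paper: Lemma \ref{lem:inducedposs'distr} gives the forward direction, and for the converse you use Theorem \ref{thm: pi satisfies f} to obtain $\pi\le\pi_\f$ and then compare the infima defining $N(\varphi)$ and $N_\f(\varphi)$. Your remark about the empty index set is a harmless extra check; the pointwise comparison of infima already covers that case, since both infima are then $1$.
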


\begin{proof}
Suppose $\f\models(\varphi;\,x)$. Then, for any possibility distribution $\pi$, if $\pi\models\f$, then $\pi\models(\varphi;\,x)$. By Lemma \ref{lem:inducedposs'distr}, $\pi_\f\models\f$. Thus, $\pi_\f\models(\varphi;\,x)$. 

Conversely, suppose $\pi_\f\models(\varphi;\,x)$. Let $\pi$ be a possibility distribution such that $\pi\models\f$. We need to show that $\pi\models(\varphi;\,x)$. By Theorem \ref{thm: pi satisfies f}, $\pi\leq\pi_\f$, which implies that $1-\pi(w)\geq 1-\pi_\f(w)$ for all $w\in\val$. Thus, $\inf\{1-\pi(w)\mid\, w(\varphi)=0,w\in\val\}\geq \inf\{1-\pi_\f(w)\mid\, w(\varphi)=0,w\in\val\}$. So, $N(\varphi)\geq N_\f(\varphi)\geq x$, where $N$ and $N_\f$ are the necessity measures induced by $\pi$ and $\pi_\f$, respectively. Hence, $\pi\models(\varphi;\,x)$. This implies that if $\pi\models\f$ then $\pi\models(\varphi;\,x)$, for any possibility distribution $\pi$. Thus, $\f\models(\varphi;\,x)$.
\end{proof}

\begin{cor}\label{cor:wt=nec'ity}
    Suppose $\f\subseteq\fm$ and $\varphi$ is an $\mb$-formula. Then, $\wt(\varphi,\f)=N_\f(\varphi)$, where $N_\f$ is the necessity measure induced by the possibility distribution $\pi_\f$ that is induced by $\f$. Moreover, if $N_\f(\varphi)>0$, then $\f\models(\varphi;\,N_\f(\varphi))$.
\end{cor}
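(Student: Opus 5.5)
By Corollary \ref{cor:pi_FsatF}, for every $x\in(0,1]$ we have $\f\models(\varphi;\,x)$ iff $\pi_\f\models(\varphi;\,x)$, and by definition of satisfaction the latter holds iff $N_\f(\varphi)\ge x$. This rewrites the set in the definition of $\wt(\varphi,\f)$ as
\[
\{x\in(0,1]\mid\,\f\models(\varphi;\,x)\}=\{x\in(0,1]\mid\,x\le N_\f(\varphi)\}=(0,N_\f(\varphi)]\cap(0,1].
\]
So the whole proof reduces to computing the supremum of this interval, splitting into two cases according to the value of $N_\f(\varphi)\in[0,1]$.

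\textsc{Case 1:} $N_\f(\varphi)>0$. The set is the nonempty interval $(0,N_\f(\varphi)]$, since $N_\f(\varphi)\le 1$ automatically. Its supremum is $N_\f(\varphi)$, which is attained. Hence $\wt(\varphi,\f)=N_\f(\varphi)$. Because $N_\f(\varphi)$ itself belongs to the set, we also get $\f\models(\varphi;\,N_\f(\varphi))$, which is the \lq\lq moreover'' part of the statement.

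\textsc{Case 2:} $N_\f(\varphi)=0$. No $x\in(0,1]$ satisfies $x\le 0$, so the set is empty. By the convention $\sup\emptyset=0$ fixed at the start of Section \ref{sec:poslog}, $\wt(\varphi,\f)=0=N_\f(\varphi)$.

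The argument is essentially routine, and the only points needing care are technical. First, $N_\f(\varphi)$ must lie in $[0,1]$. This holds because each $1-\pi_\f(w)$ lies in $[0,1]$ and $\inf\emptyset=1$. Second, the degenerate case must be handled with the empty-supremum convention. Third, the pair $(\varphi;\,N_\f(\varphi))$ is a legitimate possibilistic formula only when its weight is positive, which is exactly why the last claim assumes $N_\f(\varphi)>0$.
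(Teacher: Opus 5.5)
Your proof is correct and follows essentially the same route as the paper: both rest on Corollary \ref{cor:pi_FsatF} to turn $\f\models(\varphi;\,x)$ into $N_\f(\varphi)\ge x$, and then split on whether $N_\f(\varphi)$ is $0$ or positive. Identifying the set as the interval $(0,N_\f(\varphi)]$ does in one step what the paper does in two: it first derives $\f\models(\varphi;\,N_\f(\varphi))$ via Theorem \ref{thm: pi satisfies f}, and then bounds the supremum separately.
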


\begin{proof}
    \textsc{Case 1:} Suppose $N_\f(\varphi)=0$. Then, we claim that $\f\not\models(\varphi;\,x)$ for all $x\in(0,1]$. 

    If possible, let $\f\models(\varphi;\,x)$. Then, by Corollary \ref{cor:pi_FsatF}, $\pi_\f\models(\varphi;\,x)$. This implies that $N_\f(\varphi)\ge x>0$. This is a contradiction. Hence, $\f\not\models(\varphi;\,x)$ for all $x\in(0,1]$. Thus, $\wt(\varphi,\f)=\sup\{x\in(0,1]\mid\,\f\models(\varphi;\,x)\}=0$. So, $\wt(\varphi,\f)=N_\f(\varphi)$.

    \textsc{Case 2:} Suppose $N_\f(\varphi)>0$. 
    
    Let $\wt(\varphi,\f)=a$ and $N_\f(\varphi)=b$. We first show that $\f\models(\varphi;\,b)$.

    Suppose $\pi$ is a possibility distribution such that $\pi\models\f$. Then, by Theorem \ref{thm: pi satisfies f}, $\pi\le\pi_\f$, i.e., $\pi(w)\le\pi_\f(w)$, for all $w\in\val$. This implies that $1-\pi_\f(w)\le1-\pi(w)$ for all $w\in\val$. So, $\inf\{1-\pi_\f(w)\mid\,w(\varphi)=0,w\in\val\}\le\inf\{1-\pi(w)\mid\,w(\varphi)=0,w\in\val\}$. Thus, $b=N_\f(\varphi)\le N(\varphi)$, where $N$ is the necessity measure induced by $\pi$. Hence, $\pi\models(\varphi;\,b)$. Since $\pi$ was an arbitrary possibility distribution satisfying $\f$, this implies that $\f\models(\varphi;\,b)$, i.e., $\f\models(\varphi;\,N_\f(\varphi))$. 
    
    Thus, $b\in\{x\in(0,1]\mid\,\f\models(\varphi;\,x)\}$, and so, $b\le\sup\{x\in(0,1]\mid\,\f\models(\varphi;\,x)\}=a$.

    Now, to establish that $a=b$, we need to show that $a\le b$.

    \textsc{case 1:} $\{x\in(0,1]\mid\,\f\models(\varphi;\,x)\}=\emptyset$.

    In this case, $\f\not\models(\varphi;\,x)$ for all $x\in(0,1]$. This implies that, for each $x\in(0,1]$, there exists a possibility distribution $\pi$ such that $\pi\models\f$ but $\pi\not\models(\varphi;\,x)$, i.e., $N(\varphi)<x$, where $N$ is the necessity measure induced by $\pi$. Now, by the proof of Theorem \ref{thm: pi satisfies f}, $N_\f(\varphi)\le N(\varphi)$. So, for each $x\in(0,1]$, $N_\f(\varphi)<x$. This implies that $N_\f(\varphi)=0$, which contradicts our assumption that $N_\f(\varphi)>0$. Thus, this case is impossible.

    \textsc{case 2:} $\{x\in(0,1]\mid\,\f\models(\varphi;\,x)\}\neq\emptyset$.

    Let $y\in\{x\in(0,1]\mid\,\f\models(\varphi;\,x)\}$. Then, $\f\models(\varphi;\,y)$, and so, by Corollary \ref{cor:pi_FsatF}, $\pi_\f\models(\varphi;\,y)$. This implies that $N_\f(\varphi)\ge y$. This proves that $b\ge y$ for all $y\in\{x\in(0,1]\mid\,\f\models(\varphi;\,x)\}$, which implies that $b\ge\sup\{x\in(0,1]\mid\,\f\models(\varphi;\,x)\}=a$.

    Hence, $a=\wt(\varphi,\f)=N_\f(\varphi)=b$.
\end{proof}

\begin{rem}\label{rem:N_F=infN}
    From the proof of the above corollary, we can see that  $N_\f(\varphi)$ is a lower bound of the set $\{N(\varphi)\mid\,\pi\models \f,N \hbox{ is the necessity measure induced by }\pi\}$. Since $N_\f(\varphi)\in\{N(\varphi)\mid\,\pi\models \f,N \hbox{ is the necessity measure induced by }\pi\}$, it becomes the infimum. Thus, $N_\f(\varphi)=\inf\{N(\varphi)\mid\,\pi\models\f,N \hbox{ is the necessity measure induced by }\pi\}$. 
     In other words, the best weight $x$ such that $(\varphi;\,x)$ is a logical consequence of $\f$, i.e., $\wt(\varphi,\f)$, is the best lower bound of necessity measures, if this is greater than zero. 
\end{rem}

\begin{cor}\label{cor:non-finitary}
    $\plog$ is not finitary.
\end{cor}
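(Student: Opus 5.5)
We will exhibit an explicit infinite set $\f$ and a possibilistic formula $\Phi$ with $\f\models\Phi$ but $\f_0\not\models\Phi$ for every finite $\f_0\subseteq\f$. The natural choice exploits weights accumulating to a supremum that is never attained. Fix a variable $p\in V$ and let
\[
\f=\{(p;\,1-\tfrac{1}{n})\mid n\ge 2\},\qquad \Phi=(p;\,1).
\]

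First we show $\f\models\Phi$. By Corollary \ref{cor:pi_FsatF} it suffices to check $\pi_\f\models(p;\,1)$, i.e.\ $N_\f(p)\ge 1$. Take any $w\in\val$ with $w(p)=0$; such $w$ exist, e.g.\ a valuation sending every variable to $0$ extended appropriately, which is routine to justify from the definition of $\mb$-valuations. Then
\[
\pi_\f(w)=\inf\{1-x\mid (p;\,x)\in\f\}=\inf\{\tfrac1n\mid n\ge2\}=0.
\]
Hence $1-\pi_\f(w)=1$ for all such $w$, so $N_\f(p)=1$ and $\f\models(p;\,1)$. Equivalently, Theorem \ref{thm: pi satisfies f} gives that every $\pi\models\f$ satisfies $\pi\le\pi_\f$, which vanishes on the valuations falsifying $p$.

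Next, let $\f_0\subseteq\f$ be finite. Then the weights occurring in $\f_0$ have a maximum $m=1-\frac1k<1$, or $\f_0=\emptyset$, in which case set $m=0$. Define $\pi(w)=1-m$ for all $w\in\val$. For each $(p;\,x)\in\f_0$ we have $N(p)=\inf\{m\mid w(p)=0\}=m\ge x$, using the existence of a valuation falsifying $p$. So $\pi\models\f_0$, but $N(p)=m<1$, giving $\pi\not\models(p;\,1)$. Thus $\f_0\not\models\Phi$, and $\plog$ is not finitary.

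The only delicate point is the existence of an $\mb$-valuation falsifying $p$. This follows since $p$ is not an $\mb$-tautology: by soundness, otherwise every valuation would satisfy it, whereas valuations can be constructed freely on variables by the usual bivaluation construction in \cite{CarnielliConiglio2016}. We cite this rather than prove it. Everything else is direct computation with the definitions and the earlier results.
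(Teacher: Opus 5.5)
Your proof is correct and follows essentially the same route as the paper: an infinite family of formulas $(p;\,\cdot)$ whose weights increase to an unattained supremum, with the entailment of the limit weight checked through $\pi_\f$ and Corollary~\ref{cor:pi_FsatF}. The differences are minor. You take the limit weight to be $1$ rather than a fixed $x\in(0.5,1)$, and for a finite subset you exhibit the constant countermodel $\pi\equiv 1-m$ directly, instead of computing $\wt(p,\f')=N_{\f'}(p)$ via Corollary~\ref{cor:wt=nec'ity} as the paper does.
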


\begin{proof}
    We prove this by the following example.

     Let $\f=\{(p;\,x-1/n)\mid\,n\in\{2,3,\dots\}\}$, where $p\in V$ is a variable of $\mb$ and  $x$ is a fixed real number such that $0.5<x<1$. Clearly, there exists $w\in\val$ such that $w(p)=0$. Thus, $\{1-(x-1/n)\mid\,w(p)=0\}\neq\emptyset$. So, $\pi_\f(w)=\inf\{1-(x-1/n)\mid\,w(p)=0\}=1-x$. This implies that $N_\f(p)=\inf\{1-\pi_\f(w):w(p)=0,w\in\val\}=x\neq0$. Then, by Corollary \ref{cor:wt=nec'ity}, $\wt(p,\f)=N_{\f}(p)=x$, and hence, $\pi_\f\models(p;\,x)$. Thus, by Corollary \ref{cor:pi_FsatF}, $\f\models(p;\,x)$.

    Now, we claim that for any finite $\f^\prime\subseteq\f$, $\f^\prime\not\models(p;\,x)$. 
    
    \textsc{Case 1:} $\f^\prime=\emptyset$.

    We note that $\pi_\emptyset(w)=\inf\emptyset=1$ for any $w\in\val$. Hence, $N_\emptyset(p)=\inf\{1-\pi_\emptyset(w)\mid\,w(p)=0,w\in\val\}=0$. Hence, by Corollary \ref{cor:wt=nec'ity}, $\wt(p,\emptyset)=\sup\{y\in(0,1]\mid\,\emptyset\models(p,y)\}=0$, which implies that $\emptyset\not\models(p;\,x)$ as $x>0$.

    \textsc{Case 2:} $\f^\prime$ is a non-empty finite subset of $\f$.
    
    Suppose $\f^\prime=\{(p;\,x-1/n_1),\ldots,(p;\,x-1/n_k)\}$. Without loss of generality, we assume that the $x-1/n_1\le\cdots\le x-1/n_k$. Thus, $\pi_{\f^\prime}(w)=\inf\{1-(x-1/n_i)\mid\,(p;\,x-1/n_i)\in\f^\prime,w(p)=0\}=1-(x-1/n_k)$. So, $N_{\f^\prime}(p)=\inf\{1-\pi_{\f^\prime}(w)\mid\,w(p)=0,w\in\val\}=x-1/n_k$. Then, by Corollary \ref{cor:wt=nec'ity}, $\wt(p,\f^\prime)=x-1/n_k$, i.e., $\sup\{y\in(0,1]\mid\,\f^\prime\models(p;\,y)\}=x-1/n_k< x$. Hence, $\f^\prime\not\models(p;\,x)$.
\end{proof}

\begin{rem}
Given two possibility distributions, $\pi$ and $\pi^\prime$, $\pi$ is said to be \emph{more specific (more informative or restrictive)} than $\pi^\prime$ if for each valuation $w$, $\pi(w)\leq\pi^\prime(w)$. 

The principle of minimal specificity states that any hypothesis not known to be impossible cannot be ruled out \cite{DuboisPrade2014}. This plays an important role in possibility theory. Given a set of possibilistic formulas, there are usually many different possibility distributions that satisfy it. The formulas can be thought of as evidences with degrees of certainty attached. So, according to the principle of minimal specificity, we should select the least specific distribution compatible with the given evidence. In other words, we should assign a degree of maximum possibility to each valuation/state, taking evidence into account. This principle is obeyed here through the construction of $\pi_\f$ for a given set of formulas $\f$.
\end{rem}

 \begin{lem}\label{lem: comp'on  min nd arrow} Suppose $\pi$ is a possibility distribution and $N$ is the necessity measure induced by it. Then, for any $\mb$-formulas $\varphi, \psi\in \lang $, 
    $N(\psi)\geq \min\{N(\varphi),N(\varphi\limp\psi)\}$. 
 \end{lem}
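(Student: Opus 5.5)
The idea is to compare the sets over which the infima defining the necessity measures are taken, exactly as in the proof of Lemma \ref{lem: nec'ty axioms proof}. Write
\[
A=\{1-\pi(w)\mid w(\varphi)=0\},\quad B=\{1-\pi(w)\mid w(\varphi\limp\psi)=0\},\quad C=\{1-\pi(w)\mid w(\psi)=0\},
\]
with $w$ ranging over $\val$, so that $N(\varphi)=\inf A$, $N(\varphi\limp\psi)=\inf B$ and $N(\psi)=\inf C$. The goal is to show $C\subseteq A\cup B$. From this we get
\[
\inf C\ \ge\ \inf(A\cup B)=\min\{\inf A,\inf B\}.
\]
This uses the convention $\inf\emptyset=1$, under which $\inf(A\cup B)=\min\{\inf A,\inf B\}$ still holds when either set is empty.

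For the inclusion, take $w\in\val$ with $w(\psi)=0$. By the clause for $\limp$ in the definition of an $\mb$-valuation, $w(\varphi\limp\psi)=1$ iff $w(\varphi)=0$ or $w(\psi)=1$. Since $w(\psi)=0$, there are two cases.
\begin{enumerate}
\item If $w(\varphi)=0$, then $1-\pi(w)\in A$.
\item If $w(\varphi)=1$, then $w(\varphi\limp\psi)=0$, so $1-\pi(w)\in B$.
\end{enumerate}
Either way $1-\pi(w)\in A\cup B$, which gives $C\subseteq A\cup B$.

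Finally, a subset has infimum at least that of the superset, so $N(\psi)=\inf C\ge\inf(A\cup B)=\min\{N(\varphi),N(\varphi\limp\psi)\}$. There is no real obstacle here. The only care needed is with empty sets: if $C=\emptyset$ then $N(\psi)=1$ and the inequality is trivial, and if $A$ or $B$ is empty its infimum is $1$, which does not disturb the minimum.
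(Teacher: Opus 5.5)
Your proof is correct and follows essentially the same route as the paper. Both arguments split the countermodels of $\psi$ by the truth value of $\varphi$, and both use the $\limp$ clause to place those with $w(\varphi)=1$ among the countermodels of $\varphi\limp\psi$. The only difference is cosmetic: the paper identifies the other piece exactly with the countermodels of $\varphi\lor\psi$, gets $N(\psi)=\min\{N(\varphi\lor\psi),N(\varphi\limp\psi)\}$, and then uses monotonicity of $N$ for $\varphi\models_\mb\varphi\lor\psi$, whereas you enlarge that piece directly to all countermodels of $\varphi$ and get the inequality in one step.
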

 
 \begin{proof} We note that $N(\psi)=\inf\{1-\pi(w)\mid\,w(\psi)=0,w\in\val\}=\inf(A\cup B)=\min\{\inf A,\inf B\}$, where $A=\{1-\pi(w)\mid\,w(\psi)=0 \mbox{ and } w(\varphi)=0, w\in\val\}$ and 
           $B=\{1-\pi(w)\mid w(\psi)=0\mbox{ and } w(\varphi)=1, w\in\val\}$. Now, $A=\{1-\pi(w)\mid\,w(\varphi\lor\psi)=0,w\in\val\}$ and $B=\{1-\pi(w)\mid\,w(\varphi\limp\psi)=0,w\in\val\}$. Thus, $\inf A=N(\varphi\lor\psi)$ and $\inf B=N(\varphi\limp\psi)$. Now, as $\varphi\models_\mb \varphi\lor \psi$, by Lemma \ref{lem: nec'ty axioms proof}, $N(\varphi\lor\psi)\geq N(\varphi)$. Hence, $ N(\psi)=\min\{N(\psi\lor\varphi), N(\varphi\limp \psi)\}\geq \min\{N(\varphi), N(\varphi\limp \psi)\}$.
 \end{proof}

 Using the above lemma, we now prove a deduction theorem for $\plog$ below.
 
 \begin{thm}[Deduction Theorem]\label{thm:ded'on thm poslog}
     Suppose $\f\subseteq\fm$, $\varphi,\psi\in\lang$ are $\mb$-formulas, and $x\in(0,1]$. Then,
     $\f\cup\{(\varphi;\,1)\}\models(\psi;\,x)$ iff $\f\models(\varphi\limp \psi;\,x)$.
 \end{thm}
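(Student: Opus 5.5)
We prove both directions by working directly with possibility distributions and the necessity measures they induce. Lemma \ref{lem: comp'on  min nd arrow} gives one direction. A pointwise comparison between $\pi$ and its restriction to the models of $\varphi$ gives the other.

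For the direction ($\Leftarrow$), suppose $\f\models(\varphi\limp\psi;\,x)$. Let $\pi$ be a possibility distribution with $\pi\models\f\cup\{(\varphi;\,1)\}$ and induced necessity measure $N$. Then $\pi\models\f$, so $N(\varphi\limp\psi)\ge x$. Also $N(\varphi)\ge 1$, so $N(\varphi)=1$. By Lemma \ref{lem: comp'on  min nd arrow},
\[
N(\psi)\ge\min\{N(\varphi),N(\varphi\limp\psi)\}\ge\min\{1,x\}=x .
\]
Hence $\pi\models(\psi;\,x)$, and so $\f\cup\{(\varphi;\,1)\}\models(\psi;\,x)$.

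For the direction ($\Rightarrow$), suppose $\f\cup\{(\varphi;\,1)\}\models(\psi;\,x)$, and let $\pi\models\f$. We must show $N(\varphi\limp\psi)\ge x$. Define $\pi'$ by $\pi'(w)=\pi(w)$ if $w(\varphi)=1$, and $\pi'(w)=0$ if $w(\varphi)=0$.
\begin{enumerate}
\item Since $\pi'\le\pi\le\pi_\f$, Theorem \ref{thm: pi satisfies f} gives $\pi'\models\f$.
\item Every $w$ with $w(\varphi)=0$ contributes $1-\pi'(w)=1$, so $N'(\varphi)=1$ and $\pi'\models(\varphi;\,1)$.
\item By hypothesis, therefore, $N'(\psi)\ge x$.
\item Now take any $w$ with $w(\varphi\limp\psi)=0$, i.e.\ $w(\varphi)=1$ and $w(\psi)=0$. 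Then $1-\pi(w)=1-\pi'(w)\ge N'(\psi)\ge x$.
\item Taking the infimum over all such $w$ gives $N(\varphi\limp\psi)\ge x$. If no such $w$ exists, the infimum is $1$ by convention, and the bound still holds.
\end{enumerate}
Thus $\pi\models(\varphi\limp\psi;\,x)$.

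The main obstacle is the ($\Rightarrow$) direction, specifically finding a distribution that satisfies $(\varphi;\,1)$ without disturbing $\f$. The weight $1$ on $\varphi$ is essential here. It forces all valuations falsifying $\varphi$ to have possibility $0$, and zeroing them out can only make the distribution more specific, so Theorem \ref{thm: pi satisfies f} preserves satisfaction of $\f$. The remaining verifications are routine unpacking of the definition of $N$, using that $\mb$-valuations interpret $\limp$ classically.
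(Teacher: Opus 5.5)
Your proof is correct and is essentially the paper's argument: the ($\Leftarrow$) direction is identical, and for ($\Rightarrow$) the paper uses the same zeroing-out idea on the canonical distribution, noting that $\pi_{\f\cup\{(\varphi;\,1)\}}$ is exactly $\pi_\f$ set to $0$ on the valuations falsifying $\varphi$, and passing between $\models$ and canonical distributions via Corollary \ref{cor:pi_FsatF}. You run the same construction for an arbitrary $\pi\models\f$ and use Theorem \ref{thm: pi satisfies f} to see that $\pi'$ still satisfies $\f$; this works equally well, since possibility distributions in this paper need not be normalized.
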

 
 \begin{proof}
     Suppose $\f\models(\varphi\limp\psi;\,x)$. We need to show that $\f\cup\{(\varphi;\,1)\}\models(\psi;\,x)$. Let $\pi$ be a possibility distribution such that $\pi\models\f\cup\{(\varphi;\,1)\}$. Then, $\pi\models\f$ and $N(\varphi)=1$, where $N$ is the necessity measure induced by $\pi$. Now, as $\f\models(\varphi\limp\psi;\,x)$, since $\pi\models\f$, $\pi\models(\varphi\limp\psi;\,x)$, i.e., $N(\varphi\limp\psi)\geq x$. Then, since $N(\varphi)=1$, by Lemma \ref{lem: comp'on  min nd arrow}, $N(\psi)\geq\min\{N(\varphi), N(\varphi\limp \psi)\}=N(\varphi\limp\psi)\ge x$. Thus, $\pi\models(\psi;\,x)$, and hence, $\f\cup\{(\varphi;\,1)\}\models(\psi;\,x)$.

    Conversely, suppose $\f\cup\{(\varphi;\,1)\}\models(\psi;\,x)$. Then, by Corollary \ref{cor:pi_FsatF}, $\pi_{\f\cup\{(\varphi;\,1)\}}\models(\psi;\,x)$, where $\pi_{\f\cup\{(\varphi;\,1)\}}$ is the possibility distribution induced by $\f\cup\{(\varphi;\,1)\}$. So, $N_{\f\cup\{(\varphi;\,1)\}}(\psi)\geq x$, where $N_{\f\cup\{(\varphi;\,1)\}}$ is the necessity measure induced by $\pi_{\f\cup\{(\varphi;\,1)\}}$. This implies that $\inf\{1-\pi_{\f\cup\{\varphi;\,1\}}(w)\mid\,w(\psi)=0,w\in\val\}\geq x$. Next, we note that $\pi_{\f\cup\{(\varphi;\,1)\}}(w)=\pi_\f(w)$, for any valuation $w$ such that $w(\varphi)=1$. So, 
    \[
    \begin{array}{lcl}
         \{1-\pi_\f(w)\mid\,w(\varphi\limp\psi)=0,w\in\val\}&=&\{1-\pi_\f(w)\mid\,w(\varphi)=1,w(\psi)=0,w\in\val\}\\
         &=&\{1-\pi_{\f\cup\{\varphi;\,1\}}(w)\mid\,w(\varphi)=1,w(\psi)=0,w\in\val\}\\
         &\subseteq&\{1-\pi_{\f\cup\{\varphi;\,1\}}(w)\mid\,w(\psi)=0,w\in\val\}.
    \end{array}
    \]
    Thus, 
    \[
    \begin{array}{lcl}
         N_\f(\varphi\limp\psi)&=&\inf\{1-\pi_\f(w)\mid\,w(\varphi\limp\psi)=0,w\in\val\}\\
         &\ge&\inf\{1-\pi_{\f\cup\{\varphi;\,1\}}(w)\mid\,w(\psi)=0,w\in\val\}\\
         &\ge&x.
    \end{array}
    \]
    Hence, by Corollary \ref{cor:pi_FsatF}, $\f\models(\varphi\limp\psi;\,x)$.
 \end{proof}

\section{Hilbert System for \textbf{PosLog} and Soundness}\label{sec:poslog soundness}

We now present the following Hilbert-style system for $\plog$. The axioms in the following list are the axioms of $\mb$ weighted by 1. The first inference rule is known as \textit{graded modus ponens (GMP)}. The second inference rule (S) intuitively says that if a proposition holds with some degree of certainty, then it is also holds with any lesser degree of certainty as well.

\vspace{1mm}
\textsc{Axiom Schema}
\begin{enumerate}
    \item $(\alpha\limp(\beta\limp\alpha);\,1)$
    \item $((\alpha\limp(\beta\limp\gamma))\limp((\alpha\limp\beta)\limp(\alpha\limp\gamma));\,1)$
    \item $(\alpha\limp(\beta\limp(\alpha\land\beta));\,1)$
    \item $((\alpha\land\beta)\limp\alpha;\,1)$
    \item $((\alpha\land\beta)\limp\beta;\,1)$
    \item $(\alpha\limp(\alpha\lor\beta);\,1)$
    \item $(\beta\limp(\alpha\lor\beta);\,1)$
    \item $((\alpha\limp\gamma)\limp((\beta\limp\gamma)\limp(\alpha\lor\beta)\limp\gamma);\,1)$
    \item $((\alpha\limp\beta)\lor\alpha;\,1)$
    \item $(\alpha\lor\neg\alpha;\,1)$
    \item $(\circ\alpha\limp(\alpha\limp(\neg\alpha\limp\beta));\,1)$
    \item $(\circ\alpha\lor (\alpha\land\neg\alpha);\,1)$
\end{enumerate}

\textsc{Inference Rules} 
\begin{enumerate}
    \item $\begin{array}{c}
         (\alpha;\,x),\,(\alpha\limp\beta;\,y)\\
         \hline
         (\beta;\,\min\{x,y\})
    \end{array}$\hspace{0.1in} [GMP]
    \item $\begin{array}{c}
        (\alpha;\,x)\\
        \hline
        (\alpha;\,y)
    \end{array}$, for any $y\leq x$\hspace{0.1in} [S]
\end{enumerate}

 \begin{dfn}\label{def:der'n}
Suppose $\f\cup\{\Phi\}\subseteq\fm$. A \emph{derivation} of $\Phi$ from $\f$  is a finite sequence $(\Phi_{1},\ldots,\Phi_{n})$ of elements in $\fm$, where $\Phi_{n}=\Phi$ and for each $1\le i\le n$, 

\begin{enumerate}[label=(\roman*)]
    \item $\Phi_{i}$ is an instance of an axiom, or
    \item $\Phi_{i}\in \f$, or
    \item there exist $1\le j,k<i$ such that $\Phi_{i}$ is obtained from $\Phi_{j},\Phi_{k}$ by a use of the rule GMP, or
    \item there exists  $j<i$ such that  $\Phi_{i}$ is the result of an application of the rule S on $\Phi_j$.
\end{enumerate}
We say that $\Phi$ is \emph{syntactically derivable} or \emph{syntactically entailed} from $\f$, and write $\f\vdash\Phi$, if there is a derivation of $\Phi$ from $\f$. If $\f=\emptyset$, then $\Phi$ is called a \textit{theorem} of $\plog$. In other words, $\Phi$ is a theorem if it can be derived using only axioms and rules.
\end{dfn}

 We now show that the above Hilbert-style system for $\plog$ is sound with respect to the semantics presented in Section \ref{sec:poslog}.
 
 \begin{thm}[Soundness]\label{thm:soundness}
     Suppose $\f\cup\{(\gamma;\,y)\}\subseteq\fm$ is a set of possibilistic formulas. If 
     $\f\vdash(\gamma;\,y)$ then $\f\models(\gamma;\,y)$.
 \end{thm}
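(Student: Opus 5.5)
We argue by strong induction on the length $n$ of a derivation $(\Phi_1,\ldots,\Phi_n)$ of $(\gamma;\,y)$ from $\f$. We show that every $\Phi_i$ in the sequence satisfies $\f\models\Phi_i$. Then the case $i=n$ gives the theorem. Fix an arbitrary possibility distribution $\pi$ with $\pi\models\f$, and let $N$ be the necessity measure induced by $\pi$. For each of the four clauses of Definition \ref{def:der'n}, we check that $\pi\models\Phi_i$, assuming this already holds for all earlier $\Phi_j$.

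\textbf{Axiom case.} Suppose $\Phi_i$ is an axiom instance $(\alpha;\,1)$. Then $\alpha$ is an instance of an axiom schema of $\mb$, so $\vdash_\mb\alpha$. By the soundness of $\mb$ with respect to its valuation semantics, $\models_\mb\alpha$. Lemma \ref{lem: nec'ty axioms proof}\ref{lem:tautology} then gives $N(\alpha)=1\ge 1$, so $\pi\models(\alpha;\,1)$. Equivalently, we may cite Lemma \ref{lem:properties}(ii).

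\textbf{Member case.} If $\Phi_i\in\f$, then $\pi\models\Phi_i$ directly from $\pi\models\f$. This is reflexivity, Lemma \ref{lem:poslog tarskian}.

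\textbf{Rule S.} Suppose $\Phi_i=(\alpha;\,z)$ is obtained from $\Phi_j=(\alpha;\,x)$ with $z\le x$. By the induction hypothesis, $N(\alpha)\ge x\ge z$, so $\pi\models\Phi_i$. One could also invoke Lemma \ref{lem:properties}(i) together with transitivity, Lemma \ref{lem:poslog tarskian}.

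\textbf{Rule GMP.} This is the only case with real content. Suppose $\Phi_i=(\beta;\,\min\{x,z\})$ is obtained from $\Phi_j=(\alpha;\,x)$ and $\Phi_k=(\alpha\limp\beta;\,z)$. By the induction hypothesis, $N(\alpha)\ge x$ and $N(\alpha\limp\beta)\ge z$. Lemma \ref{lem: comp'on  min nd arrow} then yields
\[
N(\beta)\ge\min\{N(\alpha),N(\alpha\limp\beta)\}\ge\min\{x,z\},
\]
so $\pi\models\Phi_i$.

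The main (mild) obstacle is this GMP step, and it has already been handled by Lemma \ref{lem: comp'on  min nd arrow}. That lemma carries the only nontrivial semantic fact: the falsifying valuations of $\beta$ split into those falsifying $\alpha\lor\beta$ and those falsifying $\alpha\limp\beta$. The one other point to note is that the axiom case relies on the soundness of $\mb$ itself, which is cited from \cite{CarnielliConiglio2016}. Since $\pi$ was an arbitrary distribution satisfying $\f$, we conclude $\f\models\Phi_i$ for every $i$, and in particular $\f\models(\gamma;\,y)$.
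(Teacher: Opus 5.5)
Your proof is correct and follows the same route as the paper. You fix $\pi\models\f$, show that axiom instances have necessity $1$ (using soundness of $\mb$), note that membership and rule S trivially preserve satisfaction, handle GMP via the inequality $N(\psi)\ge\min\{N(\varphi),N(\varphi\limp\psi)\}$, and then induct on the length of the derivation. The only cosmetic difference is that the paper proves $\pi\models(\varphi;\,1)$ for axioms and then passes through rule S, a step you rightly skip since axiom instances already carry weight $1$.
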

 
 \begin{proof}
     Let $\pi$ be a possibility distribution on the set of $\mb$-valuations $\val$ and $N$ be the necessity measure induced by $\pi$. We first establish the following claims.
     \begin{enumerate}
         \item\label{sd'ss;1} $\pi\models(\varphi;\,1)$ where $\varphi$ is an instance of an $\mb$-axiom.

        We need to show $N(\varphi)\geq 1$. Since $\varphi$ is an instance of an $\mb$-axiom, there is no $w$ such that $w(\varphi)=0$. Thus, $N(\varphi)=\inf\{1-\pi(w)\mid\,w(\varphi)=0, w\in\val\}=\inf\emptyset=1$.
        
        \item\label{sd'ss;2} If $\pi\models(\varphi;\,x)$ and $\pi\models(\varphi\limp\psi;\,y)$, then $\pi\models(\psi;\,\min\{x,y\})$, i.e., GMP preserves satisfaction.
         
        Suppose $\pi\models(\varphi;\,x)$ and $\pi\models(\varphi\limp\psi;\,y)$, i.e., $N(\varphi)\geq x$ and $N(\varphi\limp\psi)\geq y$. Now, by Lemma \ref{lem: comp'on  min nd arrow}, we have $N(\psi)\geq \min\{N(\varphi), N(\varphi\limp \psi)\}\ge\min\{x,y\}$. Hence, $\pi\models(\psi;\,\min\{x,y\})$.
     
        \item\label{sd'ss;3} If $\pi\models(\varphi;\,x)$ then $\pi\models(\varphi;\,y)$ for all $y\leq x$, i.e., the rule S preserves satisfaction.

        Suppose $\pi\models(\varphi;\,x)$, i.e., $N(\varphi)\geq x$. Then, for any $y\le x$, $N(\varphi)\geq y$. Hence, $\pi\models(\varphi;\,y)$.
     \end{enumerate}
     
     Now, suppose $\f\vdash(\gamma;\,y)$ and $\pi$ is a possibility distribution such that $\pi\models\f$. Since $\f\vdash(\gamma;\,y)$, there is a derivation $((\gamma_1;\,y_1),\ldots,(\gamma_n;\,y_n)=(\gamma;\,y))$ of $(\gamma;\,y)$ from $\f$.

     We will now show, by induction on the length of the derivation $n$, that $\pi\models(\gamma_{i};\, y_{i})$ for all $1\leq i\leq n$, and hence, in particular, $\pi\models(\gamma;\,y)$.

     \textsc{Base case:} $n=1$

     We note that $(\gamma_1;\,y_1)$ is either an instance of a $\plog$-axiom or is a member of $\f$. If  $(\gamma_{1};\,y_{1})$ is an instance of an axiom, then, by \ref{sd'ss;1}, $\pi\models(\gamma_1;\,1)$, and hence, by \ref{sd'ss;3}, $\pi\models(\gamma_1;\,y_1)$.

     On the other hand, if $(\gamma_{1};\, y_{1})\in\f$, then the result follows from our assumption that $\pi\models\f$.

     \textsc{Induction hypothesis:} Suppose $\pi\models(\gamma_i;\,y_i)$ for all $1\le i\le k$ for some $1\le k<n$.

    \textsc{Induction step:} We need to show that $\pi\models(\gamma_{k+1};\,y_{k+1})$.

     If $(\gamma_{k+1};\,y_{k+1})\in \f$ or $(\gamma_{k+1};\,y_{k+1})$ is an instance of an axiom of $\plog$, the conclusion follows by the same arguments as in the base case. Otherwise, we have the following cases.
     \begin{enumerate}[label=(\alph*)]
         \item There exists $1\le s,t\leq k$ such that $(\gamma_{k+1};\,y_{k+1})$ is obtained by GMP from $(\gamma_s;\,y_s)$ and $(\gamma_{t};\;y_{t})=(\gamma_{s}\limp\gamma_{k+1};\,y_t)$. Then, $y_{k+1}=\min\{y_s,y_t\}$ and $\pi\models(\gamma_{k+1};\,y_{k+1})$ by \ref{sd'ss;2}.
         \item There exists $j\leq k$ such that $(\gamma_{k+1};\,y_{k+1})$ is obtained from  $(\gamma_{j};\,y_{j})$ by rule S. Then, $y_{k+1}\le y_j$ and $\pi\models(\gamma_{k+1};\,y_{k+1})$ by \ref{sd'ss;3}. 

    Thus, $\pi\models(\gamma_i;\,y_i)$ for all $1\le i\le n$, and hence, in particular, $\pi\models(\gamma;\,y)$.

    This proves that $\f\models(\gamma;\,y)$.
    \end{enumerate}
 \end{proof}

\section{Completeness}\label{sec:poslog comp}
In this section, we discuss the completeness of $\plog$ with respect to the semantics described in Section \ref{sec:poslog}. Before going into the results, we need a few definitions and results.
     \begin{dfn}
        Let $\f\subseteq\fm$ be a set of possibilistic formulas. We define the \emph{degree of triviality} of $\f$, denoted by $\tr\f$, as follows.
            \[
            \tr\f:=\sup\{x\in(0,1]\mid\,\f\models(\bot;\,x)\}=\wt(\bot,\f).
            \]
         \end{dfn}

  \begin{rem}
      The above definition has been used for inconsistency, and denoted by Incons($\f$) in \cite{DuboisLangPrade1994}, where a possibilistic logic has been defined on classical logic. This is justified by the fact that in the setting of classical logic, triviality and inconsistency are equivalent. However, negation inconsistency does not lead to triviality in LFIs, so these two notions can be distinguished in the current framework. This is discussed further in Section \ref{sec:poslog notions}.
  \end{rem}

    \begin{rem}\label{rem:triv_equalities}
        Let $\f\subseteq\fm$ be a set of possibilistic formulas. Then, 
        \[
        \tr\f=N_\f(\bot)=\inf\{N(\bot)\mid\,\pi\models\f\}=\mathrm{inf}\{1-\pi_\f(w)\mid\,w\in\val\},
        \]
        where $N_\f$ is the necessity measure induced by the possibility distribution induced by $\f$, $\pi_\f$, and $N$ in the third term above is the necessity measure induced by $\pi$.
            
        The first equality follows from the definition and Corollary \ref{cor:wt=nec'ity}. The second equality follows from Remark \ref{rem:N_F=infN}. The last equality follows by the definition of $N_\f$ and the fact that $w(\bot)=0$ for all $w\in\val$.
    \end{rem}

\begin{lem}\label{lem:Tfae}
       Suppose $\f\subseteq\fm$ is a finite set of possibilistic formulas. Then,  $\f^*$ is satisfiable iff   $\tr\f=0$. 
\end{lem}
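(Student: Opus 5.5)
The natural tool is the last expression in Remark \ref{rem:triv_equalities}, namely $\tr\f=\inf\{1-\pi_\f(w)\mid\,w\in\val\}$. This turns the statement into a question about whether some valuation $w$ has $\pi_\f(w)=1$, or has $\pi_\f(w)$ arbitrarily close to $1$.

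For the forward direction, suppose $\f^*$ is satisfiable and take $w\in\val$ with $w(\varphi)=1$ for every $\varphi\in\f^*$. Then the set $\{1-x\mid\,(\varphi;\,x)\in\f,\,w(\varphi)=0\}$ is empty. By the convention $\inf\emptyset=1$ we get $\pi_\f(w)=1$, so $1-\pi_\f(w)=0$, and hence $\tr\f=0$. This direction does not use finiteness.

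For the converse, I argue by contraposition. Suppose $\f^*$ is not satisfiable, so every $w\in\val$ falsifies some $\varphi\in\f^*$. Since $\f$ is finite, the set of weights occurring in $\f$ is finite and nonempty; here $\f\neq\emptyset$, because $\emptyset$ is satisfiable. Let $m=\min\{x\mid\,(\varphi;\,x)\in\f\}$, so that $m>0$ because all weights lie in $(0,1]$. For each $w$, the set defining $\pi_\f(w)$ is a nonempty finite set of numbers $1-x$ with $x\ge m$. Therefore $\pi_\f(w)\le 1-m$, which gives $1-\pi_\f(w)\ge m$ for all $w$. Taking the infimum over $w$ yields $\tr\f\ge m>0$.

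The main obstacle is exactly where finiteness enters: with infinitely many formulas the weights could accumulate at $0$ and the infimum could vanish. The uniform lower bound $m$ on the weights is what handles this, and the boundary convention $\inf\emptyset=1$ must be applied carefully in the forward direction.
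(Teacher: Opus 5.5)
Your proof is correct and follows essentially the paper's route. The forward direction is identical. For the converse, the paper argues directly: finiteness makes the infimum over $\val$ attained, giving $\pi_\f(w)=1$ for some $w$, and then positivity of the weights forces the set defining $\pi_\f(w)$ to be empty. Your contrapositive with the uniform bound $\pi_\f(w)\le 1-m$ packages the same use of finiteness and positive weights, and is slightly cleaner because it avoids the paper's unstated step that $\pi_\f$ takes only finitely many values on the infinite set $\val$.
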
       

       \begin{proof}
        Suppose $\f^*$  is satisfiable. Then, there exists an $\mb$-valuation $w^*$ such that $w^*(\varphi)=1$ for all $\varphi\in \f^*$.  So, $\pi_\f(w^*)=\inf\{1-x\mid\,(\varphi;\,x)\in\f,w^*(\varphi)=0\}=\inf\emptyset=1$. Thus, by Remark \ref{rem:triv_equalities}, $\tr\f=\mathrm{inf}\{1-\pi_\f(w)\mid\,w\in\val\}=0$ (attained at $w^*$).

        Conversely, suppose $\tr\f=0$. If $\f=\emptyset$, then $\f^*=\emptyset$, and hence, is satisfiable. Next, let $\f\neq\emptyset$. Then, as $\tr\f=\inf\{1-\pi_\f(w)\mid\,w\in\val\}=0$  and $\f$ is finite, $1-\pi_\f(w)=0$, i.e., $\pi_\f(w)=1$, for some $w\in\val$. By definition, $\pi_\f(w)=\inf\{1-x\mid\,(\varphi;\,x)\in\f,w(\varphi)=0\}=1$. If possible, let $\{1-x\mid\,(\varphi;\,x)\in\f,w(\varphi)=0\}\neq\emptyset$. Then, since $\f$ is finite, there exists $(\varphi;\,x)\in \f$ such that $w(\varphi)=0$ and  $1-x=1$, i.e., $x=0$. This is a contradiction since weights of possibilistic formulas are in $(0,1]$. So, $\{1-x\mid\,(\varphi;\,x)\in\f,w(\varphi)=0\}=\emptyset$. This implies that $w(\varphi)\neq0$, i.e., $w(\varphi)=1$, for all $\varphi\in \f^*$. Hence, $\f^*$ is satisfiable.  
       \end{proof}

\begin{lem}\label{lem:cor'y deduction theorem}
     Suppose $\f\cup\{(\varphi;\,x)\subseteq\fm$.
     $\f\models(\varphi;\,x)$ iff $\f\cup\{(\sim\varphi;\,1)\}\models(\bot;\,x)$.
\end{lem}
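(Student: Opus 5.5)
The plan is to reduce the statement to the Deduction Theorem (Theorem \ref{thm:ded'on thm poslog}) applied with the formula $\sim\varphi=\varphi\limp\bot$. By that theorem,
\[
\f\cup\{(\sim\varphi;\,1)\}\models(\bot;\,x)\quad\hbox{iff}\quad\f\models(\sim\varphi\limp\bot;\,x).
\]
So it suffices to show that $\f\models(\varphi;\,x)$ iff $\f\models((\varphi\limp\bot)\limp\bot;\,x)$. By Lemma \ref{lem:equivalence}, this in turn follows once we know that $\varphi$ and $(\varphi\limp\bot)\limp\bot$ are $\models_\mb$-equivalent.

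The key step is therefore a semantic check on $\mb$-valuations. Fix a valuation $w$. First note that $w(\bot)=0$: indeed $\bot=\beta\land(\neg\beta\land\circ\beta)$, and if $w(\beta)=w(\neg\beta)=1$, then $w(\circ\beta)=0$ by the clause for $\circ$. Using the implication clause twice:
\begin{itemize}
\item $w(\varphi\limp\bot)=1$ iff $w(\varphi)=0$;
\item hence $w((\varphi\limp\bot)\limp\bot)=1$ iff $w(\varphi\limp\bot)=0$, iff $w(\varphi)=1$.
\end{itemize}
Thus $\varphi\models_\mb\sim\sim\varphi$ and $\sim\sim\varphi\models_\mb\varphi$, and Lemma \ref{lem:equivalence} gives the required equivalence of $\f\models(\varphi;\,x)$ and $\f\models(\sim\sim\varphi;\,x)$.

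As a fallback, or to double-check, the result can also be verified directly through induced distributions using Corollary \ref{cor:pi_FsatF}. Write $\mathcal{G}=\f\cup\{(\sim\varphi;\,1)\}$.
\begin{itemize}
\item If $w(\varphi)=1$, then $w(\sim\varphi)=0$, so $\pi_{\mathcal{G}}(w)=\min\{\pi_\f(w),0\}=0$ and $1-\pi_{\mathcal{G}}(w)=1$.
\item If $w(\varphi)=0$, then $w(\sim\varphi)=1$, so $\pi_{\mathcal{G}}(w)=\pi_\f(w)$.
\end{itemize}
Since $w(\bot)=0$ for every $w$, this gives $N_{\mathcal{G}}(\bot)=\inf\{1-\pi_\f(w)\mid\,w(\varphi)=0\}=N_\f(\varphi)$. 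The claim then follows from Corollary \ref{cor:pi_FsatF} on both sides.

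The main point requiring care is the fact that $w(\bot)=0$ for all $w$, which is what makes $\sim$ behave classically at the level of valuations; this has just been checked. Everything else is routine.
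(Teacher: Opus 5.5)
Your proposal is correct and follows the paper's proof exactly: the Deduction Theorem (Theorem~\ref{thm:ded'on thm poslog}) reduces the claim to $\f\models(\sim\sim\varphi;\,x)$, and Lemma~\ref{lem:equivalence} finishes once $\varphi$ and $\sim\sim\varphi$ are $\models_\mb$-equivalent. The paper only calls that equivalence ``easily checked'', whereas you actually verify it via $w(\bot)=0$. Your fallback computation $N_{\mathcal{G}}(\bot)=N_\f(\varphi)$ through Corollary~\ref{cor:pi_FsatF} is also valid, and gives a self-contained proof that avoids the Deduction Theorem altogether.
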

 
\begin{proof}
    By the Deduction theorem (Theorem \ref{thm:ded'on thm poslog}), $\f\cup\{(\sim\varphi;\,1)\}\models(\bot;\,x)$ iff $\f\models(\sim\varphi\limp\bot;\,x)$, i.e., $\f\models(\sim\sim\varphi;\,x)$. Now, it can be easily checked that $\sim\sim\varphi\models_\mb\varphi$ and $\varphi\models_\mb\sim\sim\varphi$. Thus, by Lemma \ref{lem:equivalence}, $\f\models(\sim\sim\varphi;\,x)$ iff $\f\models(\varphi;\,x)$. Hence, $\f\models(\varphi;\,x)$ iff $\f\cup\{(\sim\varphi;\,1)\}\models(\bot;\,x)$.
\end{proof}

\begin{lem}\label{lem:wt=trv}
     Suppose $\f\subseteq\fm$ is a set of possibilistic formulas and $\varphi$ is an $\mb$-formula. Then, $\wt(\varphi,\f)=\tr(\f\cup\{(\sim\varphi;\,1)\})$.
\end{lem}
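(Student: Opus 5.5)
The plan is to compare the two suprema in the definitions directly. By definition,
\[
\wt(\varphi,\f)=\sup\{x\in(0,1]\mid\,\f\models(\varphi;\,x)\}
\]
and
\[
\tr(\f\cup\{(\sim\varphi;\,1)\})=\sup\{x\in(0,1]\mid\,\f\cup\{(\sim\varphi;\,1)\}\models(\bot;\,x)\}.
\]
By Lemma \ref{lem:cor'y deduction theorem}, for every $x\in(0,1]$ we have $\f\models(\varphi;\,x)$ iff $\f\cup\{(\sim\varphi;\,1)\}\models(\bot;\,x)$. Hence the two index sets of weights are literally the same subset of $(0,1]$, and therefore their suprema coincide. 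This already gives the equality, including the degenerate case where both sets are empty: by the convention $\sup\emptyset=0$, both sides are then $0$.

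As a sanity check, I would also verify the identity through Corollary \ref{cor:wt=nec'ity} and Remark \ref{rem:triv_equalities}. These give $\wt(\varphi,\f)=N_\f(\varphi)$ and
\[
\tr(\f\cup\{(\sim\varphi;\,1)\})=\inf\{1-\pi_{\f\cup\{(\sim\varphi;\,1)\}}(w)\mid\,w\in\val\}.
\]
The comparison rests on a case split on $w$:
\begin{itemize}
\item if $w(\varphi)=1$, then $w(\sim\varphi)=0$, so $\pi_{\f\cup\{(\sim\varphi;\,1)\}}(w)=0$ and the term $1-\pi(w)$ equals $1$;
\item if $w(\varphi)=0$, then $w(\sim\varphi)=1$, so the added formula imposes no constraint and $\pi_{\f\cup\{(\sim\varphi;\,1)\}}(w)=\pi_\f(w)$.
\end{itemize}
The infimum over all $w$ therefore reduces to $\inf\{1-\pi_\f(w)\mid\,w(\varphi)=0,w\in\val\}=N_\f(\varphi)$. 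When no $w$ falsifies $\varphi$, the only terms present are the $1$'s (or $\val$ is empty, giving $\inf\emptyset=1$), which matches $N_\f(\varphi)=1$.

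The only step needing care is the set-equality argument, and it is immediate from Lemma \ref{lem:cor'y deduction theorem}. No real obstacle is expected, so I would present the short first argument as the proof.
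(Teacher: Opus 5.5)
Your proof is correct, and it reaches the result by a more direct route than the paper. Both arguments rest on Lemma \ref{lem:cor'y deduction theorem}, but the paper does not use it to identify the two index sets. Instead it splits on whether $\wt(\varphi,\f)=0$. In the positive case it invokes Corollary \ref{cor:wt=nec'ity} to show that the suprema are attained, i.e.\ $\f\models(\varphi;\,a)$ and $\f\cup\{(\sim\varphi;\,1)\}\models(\bot;\,b)$, and then transfers each across the lemma to get $a\le b$ and $b\le a$.

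Your observation is that the lemma holds for every $x\in(0,1]$, so $\{x\in(0,1]\mid\f\models(\varphi;\,x)\}$ and $\{x\in(0,1]\mid\f\cup\{(\sim\varphi;\,1)\}\models(\bot;\,x)\}$ are literally the same set. That makes the case split and the attainment argument unnecessary; the paper's Case~1 is just the empty instance of your argument.

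Your ``sanity check'' is an independent third proof, and it is also correct. It bypasses the Deduction Theorem entirely and computes $\pi_{\f\cup\{(\sim\varphi;\,1)\}}$ pointwise: possibility $0$ on every valuation satisfying $\varphi$, and $\pi_\f$ unchanged on the rest. Hence the infimum in Remark \ref{rem:triv_equalities} collapses to $N_\f(\varphi)=\wt(\varphi,\f)$. This version has the merit of showing concretely why the identity holds. Your parenthetical about $\val$ being empty is vacuous, since $\mb$-valuations exist, but it is harmless.
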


\begin{proof}
    \textsc{Case 1:} $\wt(\varphi;\,\f)=0$.

    So, $\sup\{x\in(0,1]\mid\,\f\models(\varphi;\,x)\}=0$. Since $x>0$ for any $(\varphi;\,x)$ such that $\f\models(\varphi;\,x)$, this implies that $\f\not\models(\varphi;\,x)$ for all $x\in(0,1]$. Then, by Lemma \ref{lem:cor'y deduction theorem}, $\f\cup\{(\sim\varphi;\,1)\}\not\models(\bot;\,x)$ for all $x\in(0,1]$. So, $\{x\in(0,1]\mid\, \f\cup(\sim\varphi;\,1)\models(\bot;\,x)\}=\emptyset$, and hence, $\tr(\f\cup\{(\sim\varphi;\,1)\})=\sup\{x\in(0,1]\mid\, \f\cup(\sim\varphi;\,1)\models(\bot;\,x)\}=0$. Thus, $\wt(\varphi,\f)=\tr(\f\cup\{(\sim\varphi;\,1)\})$.

    \textsc{Case 2:} $\wt(\varphi;\,\f)>0$.

    Suppose $\wt(\varphi,\f)=a$ and $\tr(\f\cup\{(\sim\varphi;\,1)\})=b$. 

     By Corollary \ref{cor:wt=nec'ity}, $\wt(\varphi,\f)=N_\f(\varphi)=a$ and $\f\models(\varphi;\,a)$, since $a>0$. Then, by Lemma \ref{lem:cor'y deduction theorem}, $\f\cup(\sim\varphi;\,1)\models(\bot;\,a)$. This implies that $a\in \{x\in(0,1]\mid\, \f\cup(\sim\varphi;\,1)\models(\bot;\,x) \}$. So, 
     \[
     a\leq \sup\{x\in(0,1]\mid\, \f\cup(\sim\varphi;\,1)\models(\bot;\,x)\}=\tr(\f\cup\{(\sim\varphi\;1)\})=b.
     \]
     Since $a>0$, this implies that $b>0$. Now, $\tr(\f\cup\{(\sim\varphi;\,1)\})=b$  implies that $N_{\f\cup\{(\sim\varphi;\,1)\}}(\bot)=b$, by Remark \ref{rem:triv_equalities}. Then, by Corollary \ref{cor:wt=nec'ity}, $\f\cup\{(\sim\varphi;\,1)\}\models(\bot;\,b)$. So, by Lemma \ref{lem:cor'y deduction theorem}, $\f\models(\varphi;\,b)$. Thus, $b\in\{x\in(0,1]\mid\,\f\models(\varphi;\,x)\}$, and so,
     \[
     b\le\sup\{x\in(0,1]\mid\,\f\models(\varphi;\,x)\}=\wt(\varphi,\f)=a.
     \]
     Hence, $\wt(\varphi,\f)=a=b=\tr(\f\cup\{(\sim\varphi;\,1)\})$.
\end{proof}

 \begin{lem}\label{lem:weight deduction}
     Suppose $\f$ is a finite set of possibilistic formulas and $(\varphi;\,x)$ is a necessity valued formula. Then, $\f\models(\varphi;\,x)$ iff $\f_{x}\models(\varphi;\,x)$.
 \end{lem}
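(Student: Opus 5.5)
The plan is to work directly with the induced possibility distributions, using Corollary \ref{cor:pi_FsatF}: $\f\models(\varphi;\,x)$ iff $N_\f(\varphi)\ge x$, and likewise $\f_x\models(\varphi;\,x)$ iff $N_{\f_x}(\varphi)\ge x$. So it suffices to compare $N_\f(\varphi)$ and $N_{\f_x}(\varphi)$ at the threshold $x$.

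The direction from $\f_x$ to $\f$ is immediate by Monotonicity (Lemma \ref{lem:poslog tarskian}), since $\f_x\subseteq\f$.

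For the converse, assume $N_\f(\varphi)\ge x$, i.e.\ $1-\pi_\f(w)\ge x$ for every $w\in\val$ with $w(\varphi)=0$. Fix such a $w$. I will show that $1-\pi_{\f_x}(w)\ge x$.
\begin{itemize}
\item By definition, $\pi_\f(w)=\inf\{1-y\mid (\psi;\,y)\in\f,\ w(\psi)=0\}$. Since $\f$ is finite, this infimum is either over the empty set (value $1$) or is a minimum $1-y_0$ attained at some $(\psi_0;\,y_0)\in\f$ with $w(\psi_0)=0$.
\item The empty case is impossible, because it would give $1-\pi_\f(w)=0<x$.
\item Hence $\pi_\f(w)=1-y_0$, and the assumption $1-\pi_\f(w)\ge x$ gives $y_0\ge x$. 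Thus $(\psi_0;\,y_0)\in\f_x$.
\item Therefore $\pi_{\f_x}(w)\le 1-y_0=\pi_\f(w)$, so $1-\pi_{\f_x}(w)\ge x$.
\end{itemize}
Taking the infimum over all $w$ with $w(\varphi)=0$ gives $N_{\f_x}(\varphi)\ge x$. If there is no such $w$, both necessities equal $1$ and the claim is trivial. Corollary \ref{cor:pi_FsatF} then yields $\f_x\models(\varphi;\,x)$.

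The only delicate step is the use of finiteness to turn the infimum into an attained minimum. Without finiteness, weights $y<x$ accumulating toward $x$ would give $\pi_\f(w)=1-x$ even though no single formula of weight $\ge x$ is falsified by $w$. The example in Corollary \ref{cor:non-finitary} shows exactly this failure, so finiteness is essential. Apart from that step, the argument is just unwinding definitions.
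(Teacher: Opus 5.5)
Your proof is correct and follows the paper's argument essentially step for step. One direction is Monotonicity. For the other, finiteness makes $\pi_\f(w)$ an attained minimum $1-y_0$ with $y_0\ge x$, so the witnessing formula lies in $\f_x$ and gives $N_{\f_x}(\varphi)\ge x$, and Corollary \ref{cor:pi_FsatF} finishes the argument. Your observation that the example in Corollary \ref{cor:non-finitary} shows finiteness cannot be dropped is a correct addition the paper does not make explicitly.
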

 
 \begin{proof}
      Suppose $\f_x\models(\varphi;\,x)$. Then, since $\f_{x}\subseteq\f$, by monotonicity, $\f\models(\varphi;\,x)$.

     Conversely, suppose $\f\models(\varphi;\,x)$. Now, by Lemma \ref{lem:inducedposs'distr}, $\pi_\f\models\f$, where $\pi_\f$ is the possibility distribution induced by $\f$. Then, $\pi_\f\models(\varphi;\,x)$, which implies that  $N_\f(\varphi)\geq x$, where $N_\f$ is the necessity measure induced by $\pi_\f$. So, $\inf\{1-\pi_\f(w)\mid\,w(\varphi)=0,w\in\val\}\ge x$. So, $1-\pi_\f(w)\ge x$, for all $w\in\val$ such that $w(\varphi)=0$, which implies that $\pi_\f(w)\leq 1-x\hbox{ for all }w\in\val\hbox{ such that }w(\varphi)=0$.
    
    Moreover, as $x$ is the weight of a possibilistic formula, $x>0$. So, $\pi_\f(w)<1$ for any $w\in\val$ such that $w(\varphi)=0$. Then, for any $w\in\val$ such that $w(\varphi)=0$, $\pi_\f(w)=\inf\{1-y\mid\,(\psi;\,y)\in\f,w(\psi)=0\}\neq 1$. This implies that $\{1-y\mid\,(\psi;\,y)\in\f,w(\psi)=0\}\neq\emptyset$. Now, as $\f$ is finite, for any $w\in\val$ with $w(\varphi)=0$, there exists $(\psi;\,y)\in\f$ such that $w(\psi)=0$ and $\pi_\f(w)=1-y$.

    In other words, for any $w\in\val$ with $w(\varphi)=0$, there exists $(\psi;\,y)\in\f$ such that $w(\psi)=0$ and $\pi_\f(w)=1-y\le1-x$, i.e., $y\ge x$. Thus, for any $w\in\val$ with $w(\varphi)=0$, 
    \[
    \{1-y\mid\,(\psi;\,y)\in\f,w(\psi)=0,y\ge x\}=\{1-y\mid\,(\psi;\,y)\in\f_x,w(\psi)=0\}\neq\emptyset.
    \]
   Moreover, for any $w\in\val$ with $w(\varphi)=0$, $\inf\{1-y\mid\,(\psi;\,y)\in\f_x,w(\psi)=0\}=\pi_{\f_x}(w)\le1-x$, i.e, $1-\pi_{\f_x}(w)\ge x$. Thus, $\inf\{1-\pi_{\f_x}(w)\mid\,w(\varphi)=0,w\in\val\}=N_{\f_x}(\varphi)\ge x$, where $N_{\f_x}$ is the necessity measure induced by $\pi_{\f_x}$. So, $\pi_{\f_x}\models(\varphi;\,x)$. Hence, by Corollary \ref{cor:pi_FsatF}, $\f_{x}\models(\varphi;\,x)$.
 \end{proof}

 \begin{rem}\label{rem:derivation recons'on}
    Suppose $\f_y^*\vdash_\mb \gamma$. Then, there exists a derivation of $\gamma$ from $\f_y^*$ in $\mb$. We claim that there exists a derivation of $(\gamma;\,y)$ from $\f_y$. We can construct this derivation of $(\gamma;\,y)$, by an inductive argument, using the steps in the derivation of $\gamma$ from $\f^*_y$. A derivation in $\mb$ consists of a sequence of formulas that are either instances of tautologies,  formulas from $\f^*_y$ or obtained by the rule MP. Now, we attach weights to the formulas in this $\mb$-derivation as follows.
    
    First, to each instance of a tautology $\varphi$ in the $\mb$-derivation, we assign a weight of 1 as $\mb$-tautologies have weight 1 in $\plog$ by Lemma \ref{lem:properties}. We then apply the second inference rule of $\plog$ to obtain $(\varphi;\,y)$. 
    
    Next, for any formula $\varphi\in\f_y^*$ used in the $\mb$-derivation, we attach its original weight as given in $\f_y$. Now, since any formula in $\f_y$ has weight greater than or equal to $y$, we again apply the second inference rule to deduce the possibilistic formula $(\varphi;\,y)$. 

    Finally, suppose $\varphi$ is obtained from $\alpha$ and $\alpha\limp\varphi$ by a use of MP in the $\mb$-derivation. Then, by the inductive hypothesis, $(\alpha;\,y)$ and $(\alpha\limp\varphi;\,y)$ have been obtained in the $\plog$-derivation that is being constructed. Hence, by GMP, we get $(\varphi;\,y)$ as $\min\{y,y\}=y$.
    
    Thus, following the above procedure, we can obtain a $\plog$-derivation of $(\gamma;\,y)$ from $\f_y$.
\end{rem}

 \begin{lem}\label{lem:proj'on deduction}
    Suppose $\f\subseteq\fm$ is finite and $(\varphi;\,x)\in\fm$. Then, $\f\models(\varphi;\,x)$ iff $\f^{*}_{x}\models_\mb\varphi$.
 \end{lem}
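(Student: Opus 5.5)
We prove the two directions separately, using Lemma \ref{lem:weight deduction} to reduce everything to the $x$-cut $\f_x$ and working with the induced distribution $\pi_{\f_x}$.

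\emph{Backward direction.} Suppose $\f^*_x\models_\mb\varphi$. By completeness of $\mb$, $\f^*_x\vdash_\mb\varphi$. Remark \ref{rem:derivation recons'on} (with $y=x$) then gives a $\plog$-derivation of $(\varphi;\,x)$ from $\f_x$. Soundness (Theorem \ref{thm:soundness}) yields $\f_x\models(\varphi;\,x)$, and monotonicity (Lemma \ref{lem:poslog tarskian}) yields $\f\models(\varphi;\,x)$.

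A purely semantic route also works. Let $w$ be a valuation with $w(\varphi)=0$. Then $w$ falsifies some $\psi\in\f^*_x$, coming from some $(\psi;\,y)\in\f_x$ with $y\ge x$. Hence $\pi_{\f_x}(w)\le 1-x$, so $N_{\f_x}(\varphi)\ge x$. Corollary \ref{cor:pi_FsatF} then gives $\f_x\models(\varphi;\,x)$.

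\emph{Forward direction.} Suppose $\f\models(\varphi;\,x)$. By Lemma \ref{lem:weight deduction}, $\f_x\models(\varphi;\,x)$, and by Corollary \ref{cor:pi_FsatF}, $N_{\f_x}(\varphi)\ge x>0$. To show $\f^*_x\models_\mb\varphi$, take any $\mb$-valuation $w$ satisfying $\f^*_x$ and suppose, for contradiction, that $w(\varphi)=0$.

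Since $w$ satisfies every formula of $\f_x^*$, the set $\{1-y\mid(\psi;\,y)\in\f_x,\ w(\psi)=0\}$ is empty. So $\pi_{\f_x}(w)=\inf\emptyset=1$. Then $N_{\f_x}(\varphi)\le 1-\pi_{\f_x}(w)=0$, which contradicts $N_{\f_x}(\varphi)\ge x>0$. Hence $w(\varphi)=1$, proving $\f^*_x\models_\mb\varphi$.

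The only delicate point is this forward direction. Passing from $\f$ to $\f_x$ is essential: formulas of weight below $x$ must not appear in the projection. That passage relies on the finiteness hypothesis, through Lemma \ref{lem:weight deduction}. Once we are working with $\f_x$, the argument is just the observation that a valuation satisfying the whole projection receives possibility $1$ under $\pi_{\f_x}$.
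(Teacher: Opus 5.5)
Your proof is correct. The backward direction is the paper's own argument (completeness of $\mb$, Remark \ref{rem:derivation recons'on}, soundness, monotonicity). Your alternative semantic route for it is also valid, and needs neither completeness of $\mb$ nor soundness of $\plog$.

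The forward direction takes a different route. The paper argues by refutation. From $\f_x\models(\varphi;\,x)$ it uses Lemma \ref{lem:cor'y deduction theorem} (hence the $\plog$ Deduction Theorem) to get $\f_x\cup\{(\sim\varphi;\,1)\}\models(\bot;\,x)$, so $\tr(\f_x\cup\{(\sim\varphi;\,1)\})\ge x>0$. Lemma \ref{lem:Tfae} then makes $\f_x^*\cup\{\sim\varphi\}$ unsatisfiable. Remark \ref{rem:unsat=trv}, the $\mb$ Deduction Theorem and the equivalence of $\sim\sim\varphi$ with $\varphi$ bring this back to $\f_x^*\models_\mb\varphi$.

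You argue directly that a model $w$ of $\f_x^*$ gets $\pi_{\f_x}(w)=1$, so it cannot falsify $\varphi$ once $N_{\f_x}(\varphi)\ge x>0$. This is really the same key fact. The half of Lemma \ref{lem:Tfae} the paper uses (positive triviality implies an unsatisfiable projection) is proved exactly by noting that a model of the projection receives possibility $1$. You inline that observation and skip the detour through $\sim\varphi$ and $\bot$. Your proof is therefore shorter and does not depend on the $\sim$-machinery.

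What the paper's route buys is a presentation of entailment as a triviality computation. This matches Lemma \ref{lem:wt=trv} and the refutation style of Dubois, Lang and Prade, and explains why $\tr$ is introduced in that section.

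As you note, in both proofs finiteness is genuinely needed only for Lemma \ref{lem:weight deduction}; the half of Lemma \ref{lem:Tfae} that is used does not need it. The example in Corollary \ref{cor:non-finitary} shows the forward direction fails without finiteness: there $\f\models(p;\,x)$ while $\f_x^*=\emptyset$.
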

 
\begin{proof} 
    Suppose  $\f\models(\varphi;\,x)$. Then, by Lemma \ref{lem:weight deduction}, $\f_{x}\models(\varphi;\,x)$. By Lemma \ref{lem:cor'y deduction theorem}, this implies that $\f_x\cup\{(\sim\varphi;\,1)\}\models(\bot;\,x)$. So, 
    \[
    \tr(\f_{x}\cup\{(\sim\varphi;\,1)\})=\sup\{y\in(0,1]\mid\,\f_x\cup\{(\sim\varphi;\,1)\}\models(\bot;\,y)\}\geq x>0.
    \]
    Then, by Lemma \ref{lem:Tfae}, $\left(\f_x\cup\{(\sim\varphi;\,1)\}\right)^*=\f_{x}^{*}\cup\{\sim\varphi\}$ is not satisfiable, which implies by Remark \ref{rem:unsat=trv}, that $\f_{x}^{*}\cup\{\sim\varphi\}\models_\mb\psi$ for any formula $\psi\in\lang$, i.e., $\f_{x}^{*}\cup\{\sim\varphi\}$ is trivial. Thus, $\f_{x}^{*}\cup\{\sim\varphi\}\models_\mb\bot$. By the Deduction theorem and the soundness theorem for $\mb$, we have $\f_x^*\models_\mb\,\sim\varphi\limp\bot$, i.e., $\f^{*}_{x}\models_\mb \,\sim\sim\varphi$. Since $\sim\sim\varphi\models_\mb\varphi$ and $\varphi\models_\mb\sim\sim\varphi$, we then have $\f^{*}_{x}\models_\mb \varphi$. 

    Conversely, suppose $\f^{*}_{x}\models_\mb\varphi$. Then, by the completeness of $\mb$, $\f^{*}_{x}\vdash_\mb\varphi$. So, there exists a derivation of $\varphi$ from $\f^{*}_{x}$. Thus, by Remark \ref{rem:derivation recons'on}, there exists a derivation of $(\varphi;\,x)$ from $\f_x$, i.e., $\f_x\vdash(\varphi;\,x)$. Hence, by soundness of $\plog$ (Theorem \ref{thm:soundness}), $\f_x\models (\varphi;\,x)$. Then, as $\f_x\subseteq \f$, by monotonicity, $\f\models(\varphi;\,x)$. 
\end{proof}
     
\begin{thm}[Completeness]
    Suppose $\f\subseteq\fm$ is finite and $(\gamma;\,y)\in\fm$. Then, $\f\models(\gamma;\,y)$ implies that $\f\vdash(\gamma;\,y)$.
\end{thm}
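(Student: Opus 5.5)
The plan is to reduce completeness of $\plog$ to completeness of $\mb$ through the cut $\f_y$, and then lift the resulting $\mb$-derivation to a weighted one using the construction in Remark \ref{rem:derivation recons'on}. Essentially all the semantic work has already been done in Lemmas \ref{lem:weight deduction} and \ref{lem:proj'on deduction}. What remains is to chain them and to check that the syntactic derivation obtained from $\f_y$ also counts as a derivation from $\f$.

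Concretely, assume $\f$ is finite and $\f\models(\gamma;\,y)$. The steps are as follows.

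First, apply Lemma \ref{lem:proj'on deduction}, which uses the finiteness of $\f$, to conclude $\f^*_y\models_\mb\gamma$. Internally this already passes through Lemma \ref{lem:weight deduction}, which replaces $\f$ by $\f_y$, and through Lemma \ref{lem:Tfae}, which translates positive triviality into unsatisfiability of a finite projection.

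Second, invoke the completeness of $\mb$ to obtain $\f^*_y\vdash_\mb\gamma$, that is, a finite $\mb$-derivation $(\gamma_1,\ldots,\gamma_n=\gamma)$.

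Third, convert this into a $\plog$-derivation of $(\gamma;\,y)$ from $\f_y$, as in Remark \ref{rem:derivation recons'on}. We proceed by induction on $i$, producing $(\gamma_i;\,y)$ at each step:
\begin{enumerate}
\item an axiom instance $\gamma_i$ becomes the weighted axiom $(\gamma_i;\,1)$, followed by rule S to get $(\gamma_i;\,y)$;
\item a hypothesis $\gamma_i\in\f^*_y$ comes from some $(\gamma_i;\,z)\in\f_y$ with $z\ge y$, and rule S gives $(\gamma_i;\,y)$;
\item an application of MP becomes GMP with weight $\min\{y,y\}=y$.
\end{enumerate}

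Fourth, since $\f_y\subseteq\f$, every line of this derivation that is justified by membership in $\f_y$ is also justified by membership in $\f$. So the same sequence is a derivation from $\f$, and $\f\vdash(\gamma;\,y)$.

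The only point needing care is the first step, because Lemma \ref{lem:proj'on deduction} requires $\f$ to be finite. This is exactly why the theorem is restricted to finite $\f$: Corollary \ref{cor:non-finitary} shows that for infinite $\f$ the semantic consequence can hold at a supremum weight that no finite derivation attains. In the axiom case of the third step, I would also note that the $\plog$ axioms are exactly the $\mb$ axioms with weight 1, so a weighted axiom is available directly, with no appeal to Lemma \ref{lem:properties} needed.
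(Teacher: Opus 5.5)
Your proposal is correct and follows the paper's proof step for step. You use Lemma \ref{lem:proj'on deduction} to get $\f^*_y\models_\mb\gamma$, then completeness of $\mb$, then the weight-lifting construction of Remark \ref{rem:derivation recons'on} to obtain $\f_y\vdash(\gamma;\,y)$, and finally pass from $\f_y$ to $\f$. Two of your remarks slightly clarify the paper's wording: that a derivation from $\f_y$ is literally a derivation from $\f$ (the paper just says ``by monotonicity''), and that the weighted axioms $(\gamma_i;\,1)$ are available directly rather than via the semantic Lemma \ref{lem:properties}.
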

\begin{proof}
    Suppose $\f\models(\gamma;\,y)$. Then, by Lemma \ref{lem:proj'on deduction}, $\f^{*}_{y}\models_\mb\gamma$, and hence, by the completeness of $\mb$, $\f^*_y\vdash_\mb\gamma$. So, there exists an $\mb$-derivation of $\gamma$ from  $\f^{*}_{y}$. This implies, by Remark \ref{rem:derivation recons'on}, that there exists a $\plog$-derivation of $(\gamma;\,y)$ from $\f_{y}$. Thus, $\f_y\vdash(\gamma;\,y)$, and so, by monotonicity, $\f\vdash(\gamma;\,y)$. 
\end{proof}

\begin{rem}
    The above completeness theorem depends on the finiteness of the set $\f$ of possibilistic formulas. This condition can be relaxed to infinite $\f\subseteq\fm$, provided the corresponding set of weights is finite. 
\end{rem}

\section{Contradictoriness and consistency measures}\label{sec:poslog notions}
     The consistency measure of a set of formulas in a possibilistic logic is defined in \cite{DuboisLangPrade1994}, where the underlying logic is the classical propositional logic. In this article, the underlying logic $\mb$ has a consistency operator in the object language. So, we will separately introduce consistency measures at the meta- and the object-language levels.

     \begin{dfn}
    \emph{Consistency at the meta level} of a set of possibilistic formulas $\f\subseteq\fm$, denoted by $\cm\f$, is defined as follows.
    \[
    \cm\f=\sup\{\pi_\f(w)\mid\, w\in\val\}.
    \]     
    \end{dfn}

    As the definition suggests, $\cm$ is the consistency of a set of formulas at the meta-language level. This concept is similar to the one present in literature. However, since the underlying logic here is $\mb$, we introduce the following new notion of consistency at the object level.  Consistency of a formula $\alpha$ in an LFI is usually depicted in object-language with $\circ \alpha$. So we can define consistency of a set of formulas at the object-language level using $\circ\alpha$ as follows.

    \begin{dfn}
          \emph{Consistency at the object level} of a set of possibilistic formulas $\f\subseteq\fm$, denoted by $\co\f$, is defined as follows.
          \[
          \co\f=\mathrm{sup}\{x\in(0,1]\mid\,\f\models(\circ\alpha;\,x),\mbox{ where } \alpha \in\f ^*\mbox{ or }\neg \alpha \in\f^*\}
          \]
      \end{dfn}

\begin{rem}
  Given a set $\f\subseteq\fm$, the definition of $\co\f$ requires either $\alpha$ or $\neg\alpha$ to be in $\mathcal{F}^*$. The justification behind having this requirement is illustrated by the following example. Suppose $\mathcal{F} = \{(\neg p; 0.6), (\circ p; 0.7)\}$. If the disjunctive clause $\neg\alpha \in \mathcal{F}^*$ were excluded, then we would not be able to evaluate $\wt(\circ p,\f)=\co\f$, even though $\neg p \in \mathcal{F}^*$. In other words, despite $\mathcal{F}$ containing information about the falsity of $p$, information about the consistency of $p$ (i.e., $\circ p$) would be completely missed, which is undesirable.
  
  $\co\f$ gives the degree of consistency of propositions that are actually present in $\f$. For example, if $\f=\{(p;\,0.8), (\circ q;\,0.6)\}$, then since neither $q$ nor $\neg q$ belongs to $\f^*$, $\wt(\circ q,\f)$ will not be considered in the evaluation of $\co\f$. 
 
  If $\co\f>0$, then it indicates that there is some piece of information (i.e., some $\alpha$ or $\neg \alpha$) in $\f$ such that information about its consistency (i.e., $\circ \alpha$), along with its weight, is derivable from $\f$.   
    \end{rem}
    
    \begin{dfn}
        Contradictoriness of a set of possibilistic formulas $\f\subseteq\fm$, denoted by $\ctr\f$ is defined as follows.
        \[
          \ctr\f=\mathrm{sup}\{x\in(0,1]\mid\,\f\models(\alpha\land\neg\alpha;\,x),\mbox{ where } \alpha \in\f ^*\mbox{ or }\neg \alpha \in\f^*\}.
        \]
    \end{dfn}
     
    \begin{thm}  
        Suppose $\f\subseteq\fm$. Then, $\ctr\f\geq\mathrm{Trv}\;\f$.
    \end{thm}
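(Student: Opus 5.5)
The plan is to compare the two suprema directly. I will show that every $x$ witnessing the definition of $\tr\f$ also witnesses the definition of $\ctr\f$, so the set whose supremum gives $\tr\f$ is contained in the set whose supremum gives $\ctr\f$.

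First dispose of the degenerate cases. If $\tr\f=0$, the inequality is immediate, because $\ctr\f$ is a supremum of a subset of $(0,1]$ and hence is $\ge 0$ (using the convention $\sup\emptyset=0$). If $\f=\emptyset$, then $\pi_\emptyset\equiv 1$ and $N_\emptyset(\bot)=0$, so $\tr\f=0$ by Remark \ref{rem:triv_equalities}, and we are back in the first case. So we may assume $\tr\f>0$. In particular $\f\neq\emptyset$, so $\f^*\neq\emptyset$, and we fix some $\alpha\in\f^*$.

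The key observation is that $\bot\models_\mb\alpha\land\neg\alpha$. This holds because $\bot=\beta\land(\neg\beta\land\circ\beta)$ is unsatisfiable: if $w(\beta)=w(\neg\beta)=1$, then the clause for $\circ$ in the definition of an $\mb$-valuation forces $w(\circ\beta)=0$. Hence, by Lemma \ref{lem: nec'ty axioms proof}\ref{lem:com'son part}, every possibility distribution $\pi$ satisfies $N(\bot)\le N(\alpha\land\neg\alpha)$ for its induced necessity measure $N$.

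Now take any $x\in(0,1]$ with $\f\models(\bot;\,x)$, and any $\pi\models\f$. Then $N(\alpha\land\neg\alpha)\ge N(\bot)\ge x$, so $\f\models(\alpha\land\neg\alpha;\,x)$. Since $\alpha\in\f^*$, this $x$ belongs to the set defining $\ctr\f$. Therefore
\[
\{x\in(0,1]\mid\,\f\models(\bot;\,x)\}\subseteq\{x\in(0,1]\mid\,\f\models(\alpha\land\neg\alpha;\,x),\ \alpha\in\f^*\mbox{ or }\neg\alpha\in\f^*\},
\]
and taking suprema gives $\ctr\f\ge\tr\f$.

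The only real obstacle is the bookkeeping: a witness formula $\alpha$ for the defining set of $\ctr\f$ must exist, which is exactly why the empty-$\f$ and $\tr\f=0$ cases are handled separately. Everything else follows from monotonicity of $N$ (Lemma \ref{lem: nec'ty axioms proof}) and the unsatisfiability of $\bot$.
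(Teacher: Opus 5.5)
Your proof is correct and is essentially the paper's argument: the same case split (with $\tr\f=0$ covering $\f=\emptyset$), and the same key step, $\bot\models_\mb\alpha\land\neg\alpha$ for a chosen $\alpha\in\f^*$ combined with monotonicity of $N$. The only difference is bookkeeping. You prove the defining set of $\tr\f$ is contained in that of $\ctr\f$, checking every $\pi\models\f$ directly. The paper instead first obtains $\f\models(\bot;\,\tr\f)$ from the corollary identifying $\wt(\varphi,\f)$ with $N_\f(\varphi)$, then argues through $\pi_\f$. As a result, your version never needs the supremum to be attained.
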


    \begin{proof}
        If $\tr\f=0$, then we have the desired result. This covers the case when $\f=\emptyset$, since in that case, $\tr\f=\sup\emptyset=0$.

        So, now, let $\f\neq\emptyset$ and $\tr\f=a>0$. Then, $\wt(\bot,\f)=a$, and hence, by Corollary \ref{cor:wt=nec'ity}, $\f\models(\bot;\,a)$. Now, $\f\neq\emptyset$ implies that $\f^*\neq\emptyset$. Let $\alpha\in\f^*$. Since $\bot\models_\mb\alpha\land \neg\alpha$, by Lemma \ref{lem: nec'ty axioms proof}, $N_\f(\alpha\land \neg\alpha)\geq N_\f(\bot)=\tr\f=a$, where $N_\f$ is the necessity measure induced by $\pi_\f$, the possibility distribution induced by $\f$. This implies that $\pi_\f\models(\alpha\land\neg\alpha;\,a)$, and so, by Corollary \ref{cor:pi_FsatF}, $\f\models(\alpha\land\neg\alpha;\,a)$. Thus, $a\in \{x\in(0,1]\mid\,\f\models(\alpha\land\neg\alpha;\,x),\mbox{ where } \alpha \in\f ^*\mbox{ or }\neg \alpha \in\f^*\}$. Hence, $\tr\f=a\leq \mathrm{sup}\{x\in(0,1]\mid\,\f\models(\alpha\land\neg\alpha;\,x),\mbox{ where } \alpha \in\f ^*\mbox{ or }\neg \alpha \in\f^*\}=\ctr\f$. 
    \end{proof}
    
    \begin{exa}\label{exa:poslog general example} 
        Suppose $\f=\{(p;\,a_1),(\neg p;\,a_2),(q;\,a_3)\}$, where $p,q\in V$ are variables of $\mb$, and $a_1,a_2,a_3\in(0,1]$. 
        We now compute $\co\f,\ctr\f,\cm\f$, and $\tr\f$ as follows.
    
      The following table lists the possible valuations for $p,q\in V$.
     \begin{center}
     $\begin{array}{|c|c|c|c|c|}
           \hline p& \neg p & q&\circ p&\\\hline
             1&1  &1&0&v_1 \\\cline{2-2}
             &0  & &1&v_2\\\hline
            1& 1 & 0&0&v_3\\\cline{2-2}
             &0  & &1&v_4\\\hline
              0&1  &1 &1&v_5\\\hline
             0&1  & 0&1&v_6\\\hline
    \end{array}$
    \end{center}
     We now compute the possibility distribution induced by $\f$, i.e., $\pi_\f$, using Definition \ref{def:posdist'on on formulae}, as follows.
      
      $\pi_\f(v_1)=1$, since $v_1(p)=v_1(\neg p)=v_1(q)=1$.
      
      $\pi_\f(v_2)=1-a_2$, since $v_2(\neg p)=0$ and $(\neg p;\,a_2)\in\f$. By similar reasoning, $\pi_\f(v_3)=1-a_3$, and $\pi_\f(v_5)=1-a_1$.
      
      $\pi_\f(v_4)=\min\{1-a_2,1-a_3\}$, since $v_4(\neg p)=v_4(q)=0$ and $(\neg p;\,a_2),(q;\,a_3)\in\f$. By similar reasoning, $\pi_\f(v_6)=\min\{1-a_1,1-a_3\}$.

      Now, 
      \[
      \begin{array}{lcl}
           \co\f&=&\sup\{x\in(0,1]\mid\,\f\models(\circ\alpha;\,x),\mbox{ where } \alpha \in\f ^*\mbox{ or }\neg \alpha \in\f^*\}\\
           &=&\max\{\wt(\circ p,\f),\wt(\circ\neg p,\f),\wt(\circ q,\f)\}\\
           &=&\max\{N_\f(\circ p),N_\f(\circ\neg p),N_\f(\circ q)\},
      \end{array}
     \]
     where $N_\f$ is the possibility measure induced by $\pi_\f$. The last equality above is obtained by using Corollary \ref{cor:wt=nec'ity}.
     
     Now, $N_{\f}(\circ p)=\inf\{1-\pi_\f(w)\mid\,w(\circ p)=0,w\in\val\}=\min\{1-\pi_\f(v_1),1-\pi_\f(v_3)\}$. Thus, $N_\f(\circ p)=\min\{0,a_3\}=0$. Next, we note that there exists a valuation $v$ such that $v(p)=v(\neg p)=v(q)=v(\neg\neg p)=1$. This implies that $v(\circ\neg p)=0$ and $\pi_\f(v)=1$. So, $N_{\f}(\circ \neg p)=0$. Similarly, $N_\f(\circ q)=0$, as there exists a valuation $v^\prime$ such that $v^\prime(p)=v^\prime(\neg p)=v^\prime(q)=v^\prime(\neg q)=1$, and thus, $v^\prime(\circ q)=0$ and $\pi_\f(v^\prime)=1$.  

    Hence, $\co\f=0$, which implies that $\f$ does not satisfy $(\circ p;\,x)$, $(\circ\neg p;\,x)$ and $(\circ q;\,x)$ for any $x>0$. 

    \[
    \begin{array}{lcl}
         \ctr\f&=&\sup\{x\in(0,1]\mid\,\f\models(\alpha\land\neg\alpha;\,x),\mbox{ where } \alpha \in\f^* \mbox{ or }\neg \alpha \in\f^*\}\\
         &=&\max\{\wt(p\land\neg p,\f),\wt(\neg p\land\neg\neg p,\f),\wt(q\land\neg q,\f)\}\\
         &=&\max\{N_\f(p\land\neg p),N_\f(\neg p\land\neg\neg p),N_\f(q\land\neg q)\},
    \end{array}
    \]
    where $N_\f$ is as mentioned before. As before, the last equality is obtained by using Corollary \ref{cor:wt=nec'ity}. 
 
    Now, 
    \[
    \begin{array}{lcl}
    N_{\f}(p\land \neg p)&=&\inf\{1-\pi_\f(w)\mid\,w(p\land\neg p)=0,w\in\val\}\\
    &=&\min\{1-\pi_\f(v_2),1-\pi_\f(v_4),1-\pi_\f(v_5),1-\pi_\f(v_6)\}\\
    &=&\min\{a_2,1-\min\{1-a_2,1-a_3\},a_1,1-\min\{1-a_1,1-a_3\}\}\\
    &=&\min\{a_2,\max\{a_2,a_3\},a_1,\max\{a_1,a_3\}\}\\
    &=&\min\{a_2,a_1\}.
    \end{array}
    \]
    
    We note that there exists a valuation $v$ such that $v(p)=v(\neg p)=v(q)=1$ and $v(\neg\neg p)=0$, which implies that $v(\neg p\land\neg\neg p)=0$ and $\pi_\f(v)=1$. So, $N_{\f}(\neg p\land \neg \neg p)=0$. By similar arguments, $N_\f(q\land\neg q)=0$. Hence, $\ctr \f=\min\{a_1,a_2\}>0$.
       
    $\cm\f=\sup\{\pi_\f(w)\mid\,w\in\val\}=1$, since $\pi_\f(v_1)=1$.
      
    $\tr\f=\inf\{1-\pi_\f(w)\mid\,w\in\val\}=0$, since $\pi_\f(v_1)=1$.

    Thus, we can say that $\f$ is a set of possibilistic formulas which is contradictory, since $\ctr\f>0$ but not trivial, as $\tr\f=0$. This illustrates the difference between contradictoriness and triviality measures of sets of $\plog$-formulas. 
    
    The difference between the meta and the object level consistencies of $\f$ can also be observed in this example. $\cm\f=1$, which represents the maximum value of the canonical possibility distribution satisfying $\f$, while $\co\f=0$. This difference can be attributed to the fact that in LFIs, the notion of consistency is formalized at the object level. Since no formula of the form $\circ \alpha$ is entailed from $\f$ for an $\alpha\in\lang$ such that either $\alpha$ or $\neg\alpha$ is present in $\f$, $\co\f=0$.  
\end{exa}

The notions discussed in this article are useful when there are multiple sources of information that are conflicting, uncertain, or unreliable. We can, for instance, measure the strength of evidence by possibility/necessity measures, quantify conflict by the degree of contradictoriness, differentiate contradiction from total failure, which can be given by the degree of triviality. Thus, there can be applications in situations where several sensors are used, for example, in autonomous vehicles, aircraft navigation systems, or fields where decisions are based on the opinions of multiple experts, such as medical diagnosis and legal decision support. In these scenarios, our framework allows for more nuanced judgment than a binary conclusion. Such a scenario is illustrated in the following example.

\begin{exa}
An autonomous vehicle is broadly defined as one equipped with technology that senses the conditions around it, including traffic, pedestrians, and physical hazards and can adjust its course and speed without a human at the controls. The terms autonomous and self-driving cars are often used interchangeably. They mainly use cameras, radar (radio detection and ranging) and LiDAR (Light Detection and Ranging) to analyze surroundings, obstacles, control steering and braking. We will not go into the details of how these work. Here we are interested in situations where conflicting readings are provided by different sensors that are used by such a car.

Let $p$ and $q$ denote the propositions `a pedestrian is in front of the car' and `the traffic light is red,' respectively. Suppose the camera reading says that a pedestrian is detected with certainty 0.85, the radar says that no pedestrian is detected with certainty 0.75, and the LiDAR says that a red light is detected with certainty 0.9. So, the situation can be represented by the set of $\plog$ formulas $\f=\{(p;\,0.85), (\neg p;\,0.75), (q;\,0.9)\}$. We note that the corresponding set of $\mb$-formulas $\f^*$ is as in Example \ref{exa:poslog general example}. Thus, we have the same valuations as in there. By substituting $a_1=0.85,a_2=0.75$, and $a_3=0.9$, we have $\pi_\f(v_1)=1$, $\pi_\f(v_2)=1-a_2=0.25$, $\pi_\f(v_3)=1-a_3=0.1$, $\pi_\f(v_4)=\min\{1-a_2,1-a_3\}=0.1$, $\pi_\f(v_5)=1-a_1=0.15$, and $\pi_\f(v_6)=\min\{1-a_1,1-a_3\}=0.1$. 

Then, $\ctr\f=\min\{a_2,a_1\}=0.75$, $\tr\f=0$, $\cm\f=1$, and $\co\f=0$.

The contradictoriness of the given situation, measured by $\ctr\f$, comes out to be 0.75. This shows that there is strong contradictory evidence about the presence of a pedestrian in front of the car. This is clearly more informative than merely concluding that the set is contradictory.

$\tr\f=0$ shows that despite the presence of contradictory evidence, the system does not collapse and lead to triviality. The car can still reason about traffic lights and take necessary action.  This highlights the main goal of this work, that is, to have a system that, despite contradictory readings, will not stop or make absurd conclusions.

$\cm\f=1$ expresses the fact that there exists at least one highly plausible valuation of the sensor data. So, the reading set remains usable. 

$\co\f=0$ shows that there is no conclusive evidence that these readings are mutually consistent. 

An autonomous controller could use these measures instead of just a certainty measure $N$. It could take action according to conditions such as, if $\ctr\f\geq 0.5$, then engage emergency caution mode, or that, if $\tr\f>0$, then show sensor system failure alert.

\section{Conclusions}
In this article, we have explored the intersection of possibility theory and LFIs. We have observed that neither LFIs nor the classical possibilistic framework alone can capture the situation that involves inconsistency and uncertainty at the same time. This is exemplified by the discussion in the previous section, where practical scenarios with uncertainty and inconsistency have been presented. In these situations, we clearly cannot afford a total system collapse. 

LFIs allow reasoning in the presence of contradictions but do not distinguish among different evidence available. Classical possibilistic framework associates weights with information to represent degree of certainty, but under classical logic, any contradiction leads to explosion, making the system incapable of working in such situations. Thus, by integrating possibility theory with LFIs, our framework not only prevents explosion but also measures the degree of conflicting evidence, which is beneficial. This framework is achieved via the system $\plog$, which is a possibilistic logic on the LFI $\mb$.

\end{exa}
      \bibliography{PosLog.bib}
      \bibliographystyle{plain}
\end{document}